\def\ps@pprintTitle{%
 \let\@oddhead\@empty
 \let\@evenhead\@empty
 \def\@oddfoot{\centerline{\thepage}}%
 \let\@evenfoot\@oddfoot}
\theoremstyle{plain}
\newtheorem{theorem}{Theorem}%[section]
\newtheorem{observation}[theorem]{Observation}
\newtheorem{corollary}[theorem]{Corollary}
\newtheorem{conjecture}[theorem]{Conjecture}
\theoremstyle{definition}
\theoremstyle{remark}
\renewcommand{\epsilon}{\varepsilon}
\newcommand{\F}{\mathcal{F}}
\newcommand{\R}{\mathbb{R}}
\begin{document}

\title{Piercing Numbers in Approval Voting}

%%%%% for preprint purposes
% \author{Francis Edward Su}\address{F. E. Su: Harvey Mudd College. E-mail: su@math.hmc.edu}
%\author{Shira Zerbib}\address{S. Zerbib: University of Michigan. E-mail: zerbib@umich.edu}
%%\thanks{Keywords: approval voting; piercing numbers; hypergraphs; piercing set; political spectrum}

%%%% for Elsevier (says submitted to Elsevier)
\author[su]{Francis Edward Su\corref{correspondingauthor}}
\author[zerbib]{Shira Zerbib}
\address[su]{Department of Mathematics, Harvey Mudd College, Claremont CA 91711. E-mail: su@math.hmc.edu}
\address[zerbib]{Department of Mathematics
University of Michigan, Ann Arbor MI 48109.  E-mail: zerbib@umich.edu}
\cortext[correspondingauthor]{Corresponding author.}
\begin{keyword}
approval voting; piercing numbers; hypergraphs; piercing set; political spectrum
\end{keyword}

%%%%%%%%%%%%%%%%%%%%%%%%%%%%%%%%%%%%%%%%%%%%%%%%%%%%%%%%%%%%%%%%%%%%%%%%%%%%%%%%%

\begin{abstract}
We survey a host of results from discrete geometry that have bearing on the analysis of geometric models of approval voting.  Such models view the political spectrum as a geometric space, with geometric constraints on voter preferences.  
Results on piercing numbers then have a natural interpretation in voting theory, and we survey their implications for various classes of geometric constraints on voter approval sets.
\end{abstract}

\maketitle
\section{Introduction}
The theory of set intersections has a natural connection to the study of approval voting.  In approval voting, voters cast votes for as many candidates as they wish, and the candidate with the most votes wins the election \cite{brams-fishburn83}.  Approval voting has been championed as an election system that tends to elect moderate candidates and reduces the incentive to vote strategically \cite{brams-fishburn, laslier, myerson-weber93}, has desirable game-theoretic properties \cite{laslier-sanver10}, and can be a favorable alternative to plurality voting (e.g., see \cite{bouton-castanheira, bouton-castanheira-saguer, weber, myerson02}).

The connection between approval voting and geometry arises from thinking of the political spectrum as a geometric space.  By \emph{political spectrum}, we refer to the set of all possible political positions that voters can hold.  Presently, for simplicity, we shall assume that every point of the political spectrum is represented by a candidate, so that `candidate' and `position on the spectrum' are synonymous.  The political spectrum is often modeled as a line, with conservative candidates on the right and liberal candidates on the left.  In other settings, a spectrum could be multi-dimensional \cite{mazur}, or circular \cite{hardin}, etc.  

Berg et\ al.\ \cite{berg} initiated the following model that ties intersections of geometric sets to approval voting.
We assume each voter has an \emph{approval set}: the positions on the spectrum that she finds acceptable to vote for.
Such a set may have some restrictions that are natural for a given problem.  For instance, in $\R^{d}$, an approval set is naturally a convex set if when the voter approves candidates $x$ and $y$, she would also approve any candidate on a straight line between $x$ and $y$.  But there are many other potential restrictions.

%\begin{center}
\begin{figure}
 \includegraphics[width=4in]{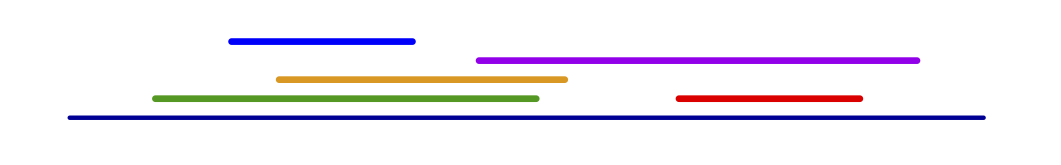}
 \caption{Approval sets of 5 voters in a linear society.}
\end{figure}
%\end{center}

A political spectrum, with a collection of voters and their approval sets, is called a \emph{society}.  Thus we may think of a society $S$ with $n$ voters as a pair $(X, E)$ where $X$ is a political spectrum and $E$ is a collection of $n$ approval sets that are subsets of $X$.  We define the \emph{agreement proportion} $a(S)$ of a society $S$ to be the largest fraction of voters who can agree on a candidate.  If $x \in X$ is a candidate who lies in the largest number of approval sets and that number is $k$, then we call $x$ an \emph{approval winner} (there may be many) and we see that $a(S)=k/n$.  
Thus studying approval voting is equivalent to understanding the ways that sets in $X$ can intersect. 

In discrete geometry, there are a number of theorems of the following type: given some `local' intersection property, some `global' intersection property must hold.   A `local' property could be information like `every two elements of $E$ intersect' or `when we pick 10 elements in $E$ some 3 of them have a common point' and the `global' conclusion is of the form `there is a point $x$ that lies in at least half the sets of $E$' or `there is a set of 7 points where each set of $E$ contains at least one of these points'.   Such theorems about set intersections can be translated to statements about approval voting. 

For instance, it follows from a classical theorem of Helly (discussed later) that a collection of pairwise-intersecting intervals must have a point common to all the sets.  The implication for a society whose political spectrum is a line and approval sets are intervals (such a society is called a \emph{linear} society) is that if approval sets are pairwise intersecting, then there is a candidate who receives the approval of every voter, i.e., $a(S)=1$.   Here we see a rather strong hypothesis yields a rather strong conclusion, but we may be more interested in situations when we can guarantee a weaker conclusion.  For instance, what hypotheses would guarantee the approval winner gets at least half the votes, or some other fraction of the votes?  A first result in this direction is the following theorem:

\begin{theorem}[Berg et~al. \cite{berg}]
\label{agreeable-thm}
Let $S$ be a linear society such that for every subset of $m$ voters, some $k$ of them agree on a candidate. 
Then $a(S) \geq \frac{k-1}{m-1}$.
\end{theorem}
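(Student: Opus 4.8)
The plan is to translate everything into the language of intervals on the line and to track a single quantity: the maximum number of approval sets that cover a common point, call it $p$. Since an approval winner is by definition a candidate lying in the most approval sets, we have $a(S)=p/n$, so it suffices to prove $p\ge \frac{(k-1)n}{m-1}$; throughout I assume $n\ge m$, as otherwise the hypothesis is vacuous. Recall also that by Helly's theorem on the line a collection of intervals has a common point exactly when they pairwise intersect, so ``$k$ voters agree on a candidate'' is the same as ``their $k$ intervals have a common point'', and a subfamily of intervals fails to have any $k$ members agreeing precisely when its maximum covering depth is at most $k-1$.

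The key structural step, and the one I expect to be the main obstacle, is a decomposition: any family of intervals whose maximum covering depth is $p$ can be partitioned into exactly $p$ subfamilies, each consisting of pairwise disjoint intervals. I would prove this by a left-to-right sweep, coloring the intervals greedily in order of their left endpoints and assigning each interval the least color not currently used by an overlapping open interval. When an interval with left endpoint $\ell$ is opened, every color in use is witnessed by an interval containing $\ell$, so at most $p$ colors are ever needed, while the point of depth $p$ forces all $p$ colors to appear. Each color class is then a set of pairwise disjoint intervals, and there are exactly $p$ of them.

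With the decomposition in hand, the rest is averaging and pigeonhole. If $p\le k-1$ then the whole family already has covering depth at most $k-1$, so any $m$ of its $n\ge m$ intervals would contain no $k$ that agree, contradicting the hypothesis; thus $p\ge k$ and in particular there are at least $k-1$ color classes. The $p$ classes partition all $n$ intervals, so the $k-1$ largest of them together contain at least $\frac{(k-1)n}{p}$ intervals, since the top $k-1$ classes have average size at least the overall average $n/p$. Let $U$ be the union of these $k-1$ classes. Because each class consists of pairwise disjoint intervals, every point is covered by at most one interval from each class, so the covering depth of $U$ is at most $k-1$; hence no $k$ intervals of $U$ share a common point. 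If $|U|\ge m$ we could select $m$ intervals inside $U$ among which no $k$ agree, contradicting the hypothesis. Therefore $|U|\le m-1$, and combining this with $|U|\ge \frac{(k-1)n}{p}$ yields $\frac{(k-1)n}{p}\le m-1$, that is $p\ge \frac{(k-1)n}{m-1}$, which is exactly $a(S)\ge\frac{k-1}{m-1}$.
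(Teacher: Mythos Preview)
Your argument is correct. The sweep-coloring step is the standard proof that an interval family of maximum depth $p$ decomposes into exactly $p$ pairwise-disjoint subfamilies (equivalently, interval graphs are perfect), and your justification is fine once one notes that at the moment a new interval is opened, every colour currently in use is witnessed by an interval through the new left endpoint, so at most $p-1$ colours are blocked. The averaging claim---that the $k-1$ largest of $p$ classes together contain at least $(k-1)n/p$ intervals---holds because in a sorted list the running average of the top $j$ terms is non-increasing in $j$, hence at least the global average $n/p$. The contradiction when $|U|\ge m$ is precisely the $(k,m)$-agreeability hypothesis applied to a subfamily of depth at most $k-1$, and the final rearrangement gives $p\ge (k-1)n/(m-1)$ as required.

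As for comparison: this paper does not actually prove the theorem---it is only stated here, with attribution to Berg et~al., together with the refined bound $a(S)\ge\lceil(n-r)/q\rceil/n$. So there is no in-paper argument to set yours against. Your proof is in the same spirit as the original source, which likewise exploits the depth-$p$ decomposition of interval families and a pigeonhole count; you have recovered essentially the intended argument. One small remark: your proof as written gives $p\ge (k-1)n/(m-1)$, hence $a(S)\ge (k-1)/(m-1)$, but does not by itself yield the sharper ceiling bound mentioned after the theorem; that refinement needs a slightly more careful count of how many intervals one can pack into $k-1$ antichains before reaching size $m$.
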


The authors note this bound can be improved slightly based on the relationship between $k$ and $m$.
If $m-1=(k-1)q + r$ for $r<k-1$, they show 
$a(S) \geq \lceil \frac{n-r}{q} \rceil / n$, where $n$ is the number of voters.  For any $n \geq m$ this gives the bound above, and as $n \rightarrow \infty$, this bound converges to $\frac{k-1}{m-1-r}$.

Since then there have been a number of other results of this type (e.g. \cite{klawe, hardin, mazur, orrison}) that consider other hypotheses on approval sets and other geometric spaces for the political spectrum $X$.  These results are all in some sense variants of Helly's Theorem.

This paper grew out of the realization that another concept from discrete geometry, the piercing number of a collection of sets, has a natural interpretation in voting theory, and a host of results about the piercing number readily translate into results about approval voting.  A {\em piercing set} of a collection of sets $E$ is a set of points intersecting every set in $E$. The \emph{piercing number} of  $E$ is the minimal size of a piercing set.  If $E$ is a collection of voter approval sets of society $S$, then a piercing set is a \emph{representative candidate set} of $S$: a set of candidates such that each voter is happy with at least one of the candidates. Thus results about piercing numbers have implications in approval voting on the size of possible representative candidate sets, as well as on the agreement proportions of societies.  

The goal of our paper is to survey a selection of these results for the social choice community who may not yet be familiar with these results from discrete geometry.  We are not attempting to be exhaustive; rather our intent is to give the reader a flavor of these results and their interpretations.  We will cite known piercing results as theorems with references, and we state their implications for approval voting as corollaries.

We should mention that there is work on the social choice side that considers approval sets and their intersections (e.g., \cite{laslier-nunez-pimienta}, \cite{nunez-xefteris} ) though these generally have not focused, as mathematicians have, on the geometric nature of the sets involved.

\section{Hypergraphs}
%Let $H$ be a finite family of subsets of some ground set $V$.  Suppose that we are given certain information about the ``local'' intersection properties of elements in $H$. Such information can be ``every two elements in $H$ intersect" or ``whenever we pick $10$ elements in $H$ at least $3$ of them have a common point". Is this information enough to guarantee that we can find a collection $C$ of (say) $100$ points in $V$ so that every element in $H$ contains a point in $C$?

A society $S=(X,E)$ is an example of a structure known as a hypergraph.  A \emph{hypergraph} $H$ is a pair $(X,E)$ where $X$ is a set and $E$ is a collection of subsets of $X$.  An element of $X$ is called a {\em vertex} of $H$ and an element of $E$ is called an {\em edge} of $H$ (and thus $X$ and $E$ are the {\em vertex set} and {\em edge set} of $H$, respectively). The {\em degree} of a vertex $v$ in $X$ is the number of edges in $E$ containing $v$. The maximum degree in $H$ is denoted by $\Delta(H)$.  The terminology comes from graph theory.  A \emph{graph} is a collection of vertices and edges, where each  edge is a subset of two vertices, and a \emph{hypergraph} is a generalization of a graph in which edges can be arbitrary subsets of vertices.

In our situation, a society $S=(X,E)$ is a hypergraph in which the political spectrum $X$ is the vertex set, and the collection of all approval sets $E$ is the edge set.  Note that the term `edge' is just a reference to the hypergraph structure, and is unrelated to the geometry of approval sets that we examine in various settings.  Each `edge' in the hypergraph represents the approval set of a voter.  Then $|E|$ is the number of voters, and the degree $\Delta(S)$ is the size of the largest number of approval sets that contain a point of $X$ in common, which we will call the \emph{agreement number}.  Then the agreement proportion, the maximal fraction of sets that contain a point in common, is given by
$$a(S):=\frac{\Delta(S)}{|E|}.$$

%In the simplest model, which we will refer to as the {\em interval model}, $V=\R$ and each voter preference set is an interval in $V$. Suppose that we have a society $S$ modeled by the interval model, and we are given the information that in every set of $k+1$ voters we can find two voters that agree on a candidate, namely that their preference sets intersect. What can we say on the agreement  proportion  $a(S)$ of this socity? It turns out, as we will see below, that in this case we have $a(S)\ge \frac{1}{k}$.  

%One can also ask the following question: what is the minimal size of a set of candidates that represent every voter in $S$? Let us give define this notion accurately. 

Given a hypergraph $H=(X,E)$, a \emph{cover}  of $H$ is a set $C\subseteq X$ such that every edge of $H$ contains a point in $C$, namely, for every $e\in E$ we have $e\cap C\neq \emptyset$.  A cover gets its name from the fact that it touches every edge in $E$, so notice that a cover is precisely a piercing set of $E$.  And when the hypergraph is a society $S=(X,E)$, a cover is just a representative candidate set of $S$.

We can define the piercing number $\tau(H)$ of a hypergraph $H=(X,E)$ to be the piercing number of its edge set $E$.  It is sometimes also called the {\em covering} number or {\em stabbing} number.  The following is an easy observation on the connection between the agreement proportion and piercing number of a hypergraph $H$:
\begin{observation}\label{obs}
$a(H) \ge \frac{1}{\tau(H)}$. 
\end{observation}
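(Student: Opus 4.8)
The plan is to prove the inequality by a direct pigeonhole argument on a minimum cover. Recalling that $a(H) = \Delta(H)/|E|$ and that $\tau(H)$ is the size of a smallest cover, the claim $a(H) \ge 1/\tau(H)$ is equivalent to the assertion $\Delta(H) \ge |E|/\tau(H)$ about the agreement number; this is the form I would actually establish. Throughout I write $\deg(v)$ for the degree of a vertex $v$, i.e.\ the number of edges containing $v$, so that $\Delta(H) = \max_{v \in X} \deg(v)$.

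First I would fix a cover $C = \{c_1, \ldots, c_\tau\}$ realizing the piercing number, so that $\tau = \tau(H) = |C|$. By the definition of a cover, every edge $e \in E$ satisfies $e \cap C \neq \emptyset$; that is, each edge contains at least one of the points $c_i$. Next I would count incidences between edges and the points of the cover. Since each of the $|E|$ edges contains at least one $c_i$, summing the number of edges through the cover points gives $\sum_{i=1}^{\tau} \deg(c_i) \ge |E|$, where an edge meeting $C$ in several points is simply counted more than once, which only helps the inequality. By averaging, some cover point $c_j$ satisfies $\deg(c_j) \ge |E|/\tau$. Because $\Delta(H)$ is the maximum degree over all of $X$, we get $\Delta(H) \ge \deg(c_j) \ge |E|/\tau$, and dividing by $|E|$ yields $a(H) = \Delta(H)/|E| \ge 1/\tau(H)$, as desired.

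I do not expect a genuine obstacle here, as the statement is flagged as an easy observation; the only point requiring any care is to phrase the pigeonhole step so that it is insensitive to edges meeting the cover in more than one point, which is why I prefer the inequality $\sum_i \deg(c_i) \ge |E|$ to an exact count. I am implicitly assuming $H$ is finite with a well-defined (finite, positive) piercing number, which is exactly the \emph{society} setting where $|E| = n$ is the finite number of voters and every approval set is nonempty, so that a finite cover exists and $\tau(H) \ge 1$.
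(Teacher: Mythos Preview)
Your proof is correct and follows essentially the same pigeonhole argument as the paper: take a minimum cover $C$ of size $\tau(H)$, observe that some point of $C$ lies in at least $|E|/\tau(H)$ edges, and conclude $\Delta(H)\ge |E|/\tau(H)$. Your version is just slightly more explicit about the incidence count and the handling of edges meeting $C$ in several points.
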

\begin{proof}
Let $H=(X,E)$. Suppose that $C\subseteq X$ is a piercing set of $H$ of size $\tau(H)$. Then every edge in $E$ intersects $C$. By the pigeonhole principle there exists a point $c\in C$ that intersects at least $\frac{|E|}{\tau(H)}$ edges, and thus the degree of $c$ is at least $\frac{|E|}{\tau(H)}$. Thus $\Delta(H)\ge \frac{|E|}{\tau(H)}$, implying the observation.    
\end{proof}

Thus one way to give lower bounds on $a(S)$ is to bound from above the piercing number $\tau(S)$. This is where discrete geometry comes to our aid; in many cases, upper bounds on piercing numbers have been extensively studied.  In the next sections, we look at several classes of piercing number results and their implications.

\section{$\R^d$-convex societies}

Helly's theorem, proved by Edouard Helly in 1913 and later published in \cite{helly}, asserts that if a family $F$ of convex sets in $\R^d$ has the property that every $d+1$ sets in $F$ have a non-empty intersection, then all the sets in $F$ have a non-empty intersection. Equivalently, we can state Helly's theorem as follows:

\begin{theorem}[Helly's theorem]\label{helly}
Suppose that $H$ is a hypergraph on vertex set $\R^d$ and edge set $E$ consisting of convex sets in $X$. If every $d+1$ sets in $E$ intersect, then $\tau(H)=1$.   
\end{theorem}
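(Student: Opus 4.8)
The plan is to prove the equivalent statement that all the convex sets in $E$ share a common point, since this is precisely the condition $\tau(H)=1$. I would argue by induction on the number of sets $n=|E|$, with the engine of the proof being Radon's theorem: any collection of $d+2$ points in $\R^d$ can be partitioned into two subsets whose convex hulls have a common point. The base case $n=d+1$ is immediate from the hypothesis, since any $d+1$ sets already intersect.

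For the inductive step, suppose $n\geq d+2$ and that the claim holds for every family of $n-1$ convex sets satisfying the $(d+1)$-wise intersection hypothesis. Label the sets $C_1,\ldots,C_n$. For each index $i$, deleting $C_i$ leaves $n-1$ sets that still meet the hypothesis, so by induction there is a point $p_i\in\bigcap_{j\neq i}C_j$. This produces $n\geq d+2$ points $p_1,\ldots,p_n$, exactly the regime where Radon's theorem applies.

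I would then invoke Radon's theorem on these points to split $\{1,\ldots,n\}$ into disjoint parts $I$ and $J$ whose convex hulls share a point $q$. The crucial observation is that $q$ lies in every $C_k$: if $k\in I$, then each $p_j$ with $j\in J$ satisfies $j\neq k$ and hence $p_j\in C_k$, so convexity forces the entire convex hull of $\{p_j:j\in J\}$ into $C_k$, and in particular $q\in C_k$; the symmetric argument handles $k\in J$. Thus $q\in\bigcap_k C_k$, completing the induction.

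The main obstacle is really packaged inside Radon's theorem, which is not established in the excerpt. I would prove it by a short linear-algebra argument: the homogeneous system $\sum_i\lambda_i p_i=0$, $\sum_i\lambda_i=0$ consists of $d+1$ equations in $n\geq d+2$ unknowns, so it has a nontrivial solution; letting $I$ and $J$ be the indices of positive and nonpositive coefficients and rescaling by $S=\sum_{i\in I}\lambda_i>0$ exhibits $q$ as a convex combination over each side. A secondary subtlety is that the statement as written does not assume $E$ is finite: for an infinite family the finite version above yields the finite intersection property, and I would then invoke compactness (assuming, or reducing to, compact sets) to upgrade this to a single point common to all sets at once.
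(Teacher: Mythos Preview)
The paper does not actually prove this theorem; it is stated as a classical result of Helly (with a citation) and then used as a black box throughout the survey. So there is no ``paper's own proof'' to compare against.

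Your argument is the standard proof of Helly's theorem via Radon's theorem, and it is correct for finite families. The induction and the Radon-partition step are set up properly, and your linear-algebra derivation of Radon's theorem is the usual one. One small point worth tightening: when you define $J$ as the indices of nonpositive coefficients, you should discard indices with $\lambda_i=0$ (or argue they are harmless), so that $S=\sum_{i\in I}\lambda_i$ is genuinely positive and the two convex combinations are well-defined.

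Your remark about the infinite case is apt and is in fact a genuine gap in the \emph{statement} as written in the paper: without a finiteness or compactness assumption the conclusion $\tau(H)=1$ can fail (e.g., the half-lines $[n,\infty)$ in $\R^1$ are pairwise intersecting but have empty common intersection). Your proposed fix --- reduce to compact sets and use the finite intersection property --- is the standard workaround, though note that ``reducing to compact sets'' is not automatic and would need an extra hypothesis or argument; the paper simply elides this issue.
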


%\begin{center}
\begin{figure}
 \includegraphics[width=2.5in]{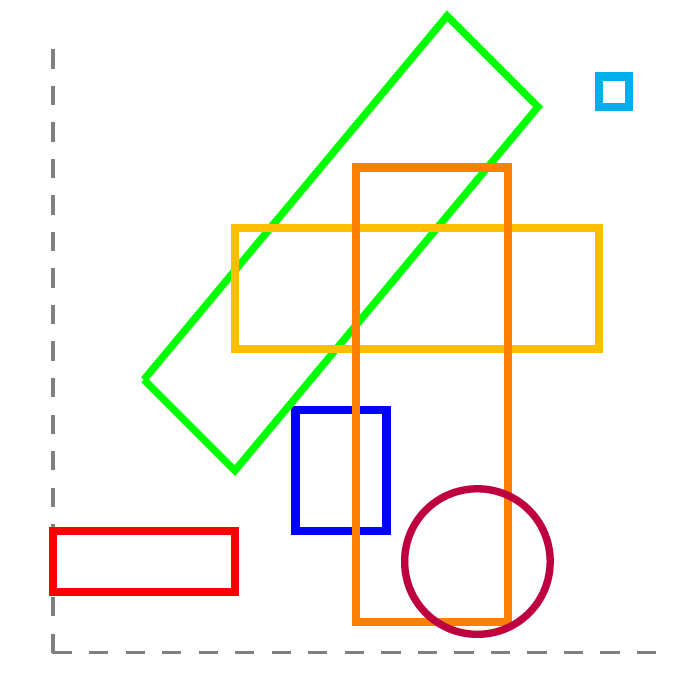}
 \caption{Approval sets of 7 voters in an $\R^2$-convex society.}
\end{figure}
%\end{center}

Following \cite{berg}, we define an {\em $\R^{d}$-convex society} to be a society $S=(\R^d, E)$ in which $E$ consists of convex sets.  
Observation \ref{obs} then implies the following corollary:
\begin{corollary}[Berg et~al.~\cite{berg}]\label{hellycor}
Suppose that $S$ is an $\R^{d}$-convex society.  If every $d+1$ voters agree on a candidate, then $a(S)=1$, that is, there exists a candidate representing all the voters. 
\end{corollary}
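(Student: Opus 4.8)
The plan is to recognize that this corollary is a direct consequence of combining Helly's theorem (Theorem~\ref{helly}) with Observation~\ref{obs}, once the voting-theoretic hypothesis has been translated into the language of set intersections. So the work is almost entirely in the dictionary between the two viewpoints.

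First I would unpack what ``every $d+1$ voters agree on a candidate'' means. A candidate is a point of the spectrum $\R^d$, and a voter approves a candidate precisely when the corresponding point lies in her approval set. Hence a set of $d+1$ voters agreeing on a candidate is exactly the statement that there is a point of $\R^d$ lying in all $d+1$ of the corresponding approval sets, i.e., those $d+1$ edges of the hypergraph $S=(\R^d,E)$ have nonempty common intersection. Since the hypothesis asserts this for \emph{every} choice of $d+1$ voters, we conclude that every $d+1$ sets in the edge set $E$ intersect.

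Next I would apply Helly's theorem. Because $S$ is a hypergraph on vertex set $\R^d$ whose edges are convex, and every $d+1$ of these edges intersect, Theorem~\ref{helly} gives $\tau(S)=1$. Finally I would invoke Observation~\ref{obs}, which yields $a(S)\ge \frac{1}{\tau(S)}=1$. Since the agreement proportion $a(S)=\Delta(S)/|E|$ is a fraction of voters and therefore never exceeds $1$, this forces $a(S)=1$; unwinding the definition, $\Delta(S)=|E|$ means some point of $\R^d$ lies in every approval set, i.e., there is a single candidate approved by all voters.

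As for obstacles, there is essentially no hard step here: the corollary follows mechanically from two results already in hand. The only points demanding any care are the faithful translation of ``agree on a candidate'' into the geometric phrase ``every $d+1$ sets intersect,'' and the trivial remark that $a(S)\le 1$ always holds, so that the lower bound supplied by Observation~\ref{obs} is in fact an equality rather than merely a bound.
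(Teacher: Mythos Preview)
Your proposal is correct and follows precisely the paper's approach: the corollary is stated immediately after Helly's theorem with the remark that Observation~\ref{obs} implies it, and your argument unpacks exactly that---translate the agreement hypothesis to the intersection hypothesis of Theorem~\ref{helly}, obtain $\tau(S)=1$, and apply Observation~\ref{obs}. Your added remark that $a(S)\le 1$ always, turning the bound into an equality, is a welcome bit of tidiness.
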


Helly's theorem initiated the broad area of research in discrete geometry that deals with questions about piercing numbers in families of convex (or ``almost convex'') sets satisfying certain intersection properties.   

Given integers $p\ge q >1$, a family $\F$ of sets is said to satisfy the {\em $(p,q)$-property} if among every $p$ sets in $\F$ there exist $q$ sets with a non-empty intersection.  

%The {\em matching number} of $\F$, namely the maximum number of pairwise disjoint sets in $\F$, is denoted by $\nu(F)$. Clearly, $\nu(\F) \le \tau(F)$. If $\nu(\F)=1$ then we say that $\F$ is an {\em intersecting} family.   

In this terminology, Helly's theorem says that if a family $\F$ of convex sets in $\R^d$ satisfies the $(d+1,d+1)$-property then $\tau(\F)=1$. Finding the piercing number of families of sets in $\R^d$ satisfying the $(p,q)$-property has been known in the literature as the {\em$(p,q)$-problem}. 

Hadwiger and Debrunner \cite{HD} conjectured in 1957 that the $(p,q)$-property in a family $\F$ of convex sets in $\R^d$ implies that $\tau(\F)$ is bounded by a constant depending on $d$, $p$, and $q$. They proved this under the condition that $(d-1)p < d(q-1)$ in the stronger following form:
\begin{theorem}[Hadwiger--Debrunner  \cite{HD}]\label{HD} 
Let $\F$ be a finite family of convex sets in $\R^d$ satisfying the $(p,q)$-property for $p\ge q>1$. If $(d-1)p < d(q-1)$ then $\tau(\F)\le p-q+1$.  
\end{theorem}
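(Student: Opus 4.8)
Rewriting the hypothesis is the natural first move. The inequality $(d-1)p<d(q-1)$ is equivalent to $d(p-q+1)<p$, so setting $k:=p-q+1$ (the quantity we must bound $\tau(\F)$ by) it reads $dk+1\le p$. I would also assume $|\F|\ge p$, since for a smaller family the $(p,q)$-property imposes no condition on $p$-element subfamilies. The plan is to prove the contrapositive: assuming $\tau(\F)\ge k+1$, I will exhibit $p$ sets of $\F$ no $q$ of which have a common point, contradicting the $(p,q)$-property.

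The engine is an elementary padding principle. If a subfamily $\mathcal{D}\subseteq\F$ has agreement number $\Delta(\mathcal{D})=t$, and I adjoin $p-|\mathcal{D}|$ further sets of $\F$, then any point lies in at most $t+(p-|\mathcal{D}|)$ of the resulting $p$ sets; this is at most $q-1$ as soon as the \emph{excess} $|\mathcal{D}|-t$ is at least $k$. So it suffices to find a subfamily of size at most $p$ whose excess $|\mathcal{D}|-\Delta(\mathcal{D})$ is at least $k$. On the line ($d=1$) this is immediate: for intervals one has $\tau=\nu$ (the piercing number equals the maximum number of pairwise disjoint intervals, a classical fact), so $\tau\ge k+1$ yields $k+1$ pairwise disjoint intervals, a subfamily of agreement $1$ and excess $k$; padding it up to $p$ sets gives agreement at most $1+(p-(k+1))=q-1$.

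For general $d$ the difficulty is that $\tau\neq\nu$, so the high-excess subfamily must be built by hand, and this is where Helly's Theorem~\ref{helly} enters. I would grow the subfamily in $k$ rounds. The first round uses that $\tau(\F)\ge 2$ forces $\bigcap\F=\emptyset$, whence Helly produces $d+1$ sets with empty common intersection; these have agreement at most $d$ and excess $1$. Each later round again invokes Helly to append $d$ new sets that raise the excess by one while the agreement number grows by at most $d-1$. Because the Helly number is exactly $d+1$, one pays $d+1$ sets for the first unit of excess and $d$ for each of the remaining $k-1$, reaching a subfamily of size $dk+1$ and excess $k$ (agreement $dk-k+1$). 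The hypothesis $dk+1\le p$ is precisely what guarantees room to pad this family up to $p$ sets, giving a $p$-subfamily of agreement at most $(dk-k+1)+(p-(dk+1))=q-1$; this padding is the only step that uses the strict inequality $(d-1)p<d(q-1)$.

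The crux, and the step I expect to be genuinely delicate, is this iterated extraction: controlling how quickly the agreement number grows as the subfamily is enlarged, so that the excess really increases by one at a per-round cost of only $d$ sets. One should not expect the piercing-number hypothesis to deliver the \emph{smallest} conceivable excess-$k$ family (size $d+k$, with every $d+1$ of its sets disjoint); it is precisely because that minimal form is unavailable that the construction must tolerate the larger size $dk+1$, and hence must have $dk<p$ to pad. Indeed, beyond this range, when $(d-1)p\ge d(q-1)$, no bound of the clean shape $p-q+1$ is available and one must fall back on the far weaker Alon--Kleitman estimates — a clear sign that the extraction exploits the threshold $dk<p$ in an essential way rather than relying on Helly or on $\tau(\F)$ in isolation.
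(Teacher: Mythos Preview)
The paper does not supply a proof of the Hadwiger--Debrunner theorem; as a survey it merely states the result with a citation to~\cite{HD}. There is therefore no in-paper argument to compare your proposal against.

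On its own merits, your outline is sound through the reductions. The rewriting $(d-1)p<d(q-1)\iff dk+1\le p$ with $k=p-q+1$ is correct, the padding principle is exactly the right device, and the reduction to ``find $\mathcal{D}\subseteq\F$ with $|\mathcal{D}|\le p$ and $|\mathcal{D}|-\Delta(\mathcal{D})\ge k$'' is clean. Your treatment of the $d=1$ case via $\tau=\nu$ for intervals is also correct.

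The gap is precisely where you locate it. The assertion that ``each later round \dots\ appends $d$ new sets that raise the excess by one while the agreement number grows by at most $d-1$'' is the entire content of the theorem, and merely invoking Helly does not explain how, from the hypothesis $\tau(\F)\ge k+1$ alone, one can always arrange this at each stage. You have not specified which $d$ sets to adjoin, nor why their addition cannot push $\Delta$ up by $d$ rather than $d-1$. Absent that mechanism the proposal is an outline, not a proof. One standard way to close the gap is to run an induction on $k$ (equivalently on $p$) and split into cases according to whether $\F$ already satisfies the $(p-1,q)$-property: if so, the inductive bound gives $\tau\le k-1$; if not, a witnessing $(p-1)$-tuple with $\Delta\le q-1$ is handed to you directly, and the $(p,q)$-property forces every remaining set to meet a $(q-1)$-fold intersection of those witnesses, which one then pierces. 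Filling in either that route or a genuine per-round extraction would complete your argument.
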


In 1992 Alon and Kleitman \cite{AK} resolved the Hadwiger--Debrunner conjecture, proving that in families of convex sets in $\R^d$ that satisfy the $(p,q)$-property, the piercing numbers are bounded by a constant:   

\begin{theorem}[Alon--Kleitman \cite{AK}]\label{pqtheorem}
Let $p\ge q\ge d+1$ be integers. Then there exists a constant $c=c(d;p,q)$ depending only on $d,p,q$, such that if a family $\F$ of convex sets in $\R^d$ satisfies the $(p,q)$-property then $\tau(\F)\le c$. 
\end{theorem}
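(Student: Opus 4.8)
The plan is to prove Theorem \ref{pqtheorem} by reducing the \emph{integer} piercing number to a \emph{fractional} one, and then recovering the integer bound via covering nets. Note first that neither Helly (Theorem \ref{helly}) nor Hadwiger--Debrunner (Theorem \ref{HD}) applies directly: the latter requires the dimension restriction $(d-1)p < d(q-1)$, and outside that range an entirely different mechanism is needed. The strategy I would follow rests on three ingredients: (i) a \emph{fractional Helly theorem}, stating that if a positive fraction $\alpha$ of the $(d+1)$-tuples of an $n$-member family of convex sets in $\R^d$ have a common point, then some single point lies in at least $\beta(\alpha,d)\cdot n$ of the sets; (ii) linear-programming duality relating the fractional piercing number $\tau^*(\F)$ to the fractional matching number $\nu^*(\F)$; and (iii) the weak $\epsilon$-net theorem for convex sets.

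The first step is to extract a fractional Helly hypothesis from the $(p,q)$-property. Since $q \ge d+1$, among every $p$ sets of $\F$ some $q$ have a common point, and any subfamily of an intersecting family is intersecting, so every $p$-subset contains an intersecting $(d+1)$-subset. A routine double-counting argument (each intersecting $(d+1)$-subset lies in at most $\binom{n-d-1}{p-d-1}$ of the $p$-subsets) then shows that at least $\binom{n}{d+1}/\binom{p}{d+1}$ of the $(d+1)$-tuples are intersecting, i.e. a fixed positive fraction $\alpha = 1/\binom{p}{d+1}$. Fractional Helly now yields a point meeting a $\beta$-fraction of the sets. The second step upgrades this to a bound on $\tau^*(\F)$. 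By LP duality it suffices to bound $\nu^*(\F)$, so I would take any fractional matching, i.e.\ weights $z_F \ge 0$ with $\sum_{F \ni x} z_F \le 1$ for every point $x$, and apply fractional Helly to the corresponding weighted (multiset) family. This produces a point $x$ with $\sum_{F \ni x} z_F \ge \beta \sum_F z_F$; combined with the matching constraint $\sum_{F \ni x} z_F \le 1$ this forces $\sum_F z_F \le 1/\beta$. Hence $\nu^*(\F) \le 1/\beta$, and therefore $\tau^*(\F) \le 1/\beta$, a constant depending only on $d,p,q$.

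The final step converts the bounded fractional transversal into a bounded integer transversal. I would take an optimal fractional cover, a finitely supported weighting $y$ on $\R^d$ with $\sum_{x \in F} y_x \ge 1$ for all $F$ and total mass $\tau^*(\F)$, and normalize it to a probability measure $\mu = y/\tau^*(\F)$. The cover condition becomes $\mu(F) \ge 1/\tau^*(\F) =: \epsilon$ for every $F \in \F$. Applying the weak $\epsilon$-net theorem for convex sets to $\mu$ gives a finite set $N$, of size depending only on $\epsilon$ and $d$, meeting every convex set of $\mu$-measure at least $\epsilon$; since each $F$ satisfies $\mu(F) \ge \epsilon$, the set $N$ pierces all of $\F$, so $\tau(\F) \le |N|$ is bounded by a constant $c(d;p,q)$.

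The main obstacle is precisely this last passage from $\tau^*$ to $\tau$. The standard $\epsilon$-net theorem would give the bound cheaply for range spaces of bounded VC dimension, but convex sets in $\R^d$ shatter arbitrarily large point sets and so have \emph{infinite} VC dimension; the ordinary net theorem is unavailable. The substitute, the weak $\epsilon$-net theorem for convex sets, is itself a deep result, and securing it (or an equivalent bound on the integrality gap for convex-set transversals) is the crux of the argument. The fractional Helly input and the duality bookkeeping are comparatively routine once that tool is in hand.
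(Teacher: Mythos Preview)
The paper does not supply a proof of Theorem~\ref{pqtheorem}; it is a survey and simply cites the result from Alon and Kleitman~\cite{AK}. So there is no in-paper argument to compare against.

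That said, your outline is correct and is essentially the original Alon--Kleitman strategy: extract a positive density of intersecting $(d{+}1)$-tuples from the $(p,q)$-property, invoke fractional Helly to get a point in a fixed fraction of the sets, use LP duality to bound $\tau^*(\F)$ by a constant, and then apply the weak $\epsilon$-net theorem for convex sets to turn the fractional transversal into an integer one. Your identification of the weak $\epsilon$-net step as the genuine crux (since convex sets have unbounded VC dimension) is exactly right. One minor point worth tightening: when you apply fractional Helly to a weighted family in step~(ii), you should remark that one may assume the weights are rational and clear denominators to obtain an honest multiset, so that the unweighted fractional Helly statement applies directly; this is routine but should be said.
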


In the language of societies, the $(p,q)$-property for approval sets becomes the condition that among every $p$ voters, some $q$ of them can approve a common candidate.  Berg et~al.~\cite{berg} call such a society \emph {$(q,p)$-agreeable}, since ``some $q$ out of every $p$ voters can agree on a candidate''.  It is unfortunate that the parameters $p,q$ are in a different order for the $(p,q)$-property and $(q,p)$-agreeability, but it is easy to remember which one is which because $q$ is always at most $p$.

By Observation \ref{obs}, we have:
\begin{corollary}\label{pqcor}
Let $p\ge q\ge d+1$ be integers. Then there exists a constant $t=t(d;p,q)$ depending only on $d,p,q$, such that in any $\R^d$-convex $(q,p)$-agreeable society, we have $a(S)\ge t$. 
\end{corollary}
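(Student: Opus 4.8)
The plan is to derive Corollary~\ref{pqcor} as a direct consequence of the Alon--Kleitman theorem (Theorem~\ref{pqtheorem}) combined with Observation~\ref{obs}, which is exactly the template the authors have set up: every earlier corollary in this section is obtained by translating a piercing-number bound into an agreement-proportion bound. So the entire argument is a matter of unwinding definitions and chaining two already-proven facts.

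\medskip

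First I would observe that an $\R^d$-convex $(q,p)$-agreeable society $S=(\R^d,E)$ is, by the definitions given in the excerpt, precisely a hypergraph whose vertex set is $\R^d$ and whose edge set $E$ is a family of convex sets satisfying the $(p,q)$-property. The $(q,p)$-agreeability condition says that among every $p$ voters some $q$ agree on a candidate; since approval sets are the edges and a common candidate is a common point, this is literally the statement that among every $p$ sets of $E$ there exist $q$ with nonempty intersection, i.e.\ the $(p,q)$-property. (It is worth flagging the parameter swap noted just before the statement, so the reader is not tripped up by it.) Since we are given $p\ge q\ge d+1$, the hypotheses of Theorem~\ref{pqtheorem} are met.

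\medskip

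Second, I would apply Theorem~\ref{pqtheorem} to the family $E$ to obtain a constant $c=c(d;p,q)$, depending only on $d,p,q$, with $\tau(S)=\tau(E)\le c$. Then I would invoke Observation~\ref{obs}, which gives $a(S)\ge 1/\tau(S)$. Combining these yields
\[
a(S)\ \ge\ \frac{1}{\tau(S)}\ \ge\ \frac{1}{c}.
\]
Setting $t=t(d;p,q):=1/c$ produces a constant depending only on $d,p,q$ with $a(S)\ge t$, which is exactly the claim. Because $c$ depends on nothing but $d,p,q$, neither does $t$, and in particular the bound is uniform over all $\R^d$-convex $(q,p)$-agreeable societies regardless of the number of voters.

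\medskip

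I do not expect any genuine obstacle here: the result is a corollary precisely because both ingredients are already in hand, and the only content is the dictionary between the combinatorial $(p,q)$-property and the voting-theoretic notion of $(q,p)$-agreeability. The one point requiring care is bookkeeping on the parameters---making sure the $(p,q)$/$(q,p)$ transposition is applied correctly and that the inequality $p\ge q\ge d+1$ lines up with the hypothesis of Theorem~\ref{pqtheorem}---but this is a matter of attention rather than difficulty. No optimization of the constant $t$ is attempted, since the Alon--Kleitman bound itself is not given explicitly.
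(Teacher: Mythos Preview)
Your proposal is correct and matches the paper's approach exactly: the paper introduces Corollary~\ref{pqcor} with the phrase ``By Observation~\ref{obs}, we have,'' i.e.\ it is obtained by applying Observation~\ref{obs} to the piercing bound from Theorem~\ref{pqtheorem}, which is precisely what you do. Your careful handling of the $(p,q)$/$(q,p)$ parameter swap and the explicit identification $t=1/c$ are nice touches that make the one-line deduction fully explicit.
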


In general, the bounds given by Alon and Kleitman's proof for $c(d;p,q)$ (and hence for $t(d;p,q)$) are far from being optimal. For example, the Alon--Kleitman proof gives $c(2;4,3)\le 253$, however, in \cite{432} it was proved that at most $13$ points are needed to pierce a family of convex sets in $\R^2$ that satisfies the $(4,3)$-property. We can therefore conclude:
\begin{corollary}
If $S$ is an $\R^2$-convex $(3,4)$-agreeable society, then there exist $13$ candidates representing all voters.
\end{corollary}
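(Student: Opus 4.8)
The plan is to obtain this corollary purely by translating between the voting-theoretic and the set-theoretic vocabularies and then invoking the sharp piercing bound for the $(4,3)$-property in the plane. First I would unwind the hypotheses. Since $S$ is an $\R^2$-convex society, its edge set $E$ is a finite family of convex sets in $\R^2$. The assumption that $S$ is $(3,4)$-agreeable says, by definition, that among every $4$ voters some $3$ of them approve a common candidate; phrased for the sets in $E$, this reads: among every $4$ sets in $E$ there exist $3$ with a nonempty common intersection. That is exactly the $(4,3)$-property for the family $E$ (note the reversal of the parameters relative to $(3,4)$-agreeability, as flagged in the discussion preceding Corollary \ref{pqcor}).

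Next I would apply the quantitative result quoted in the text: the family $E$, being a family of convex sets in $\R^2$ with the $(4,3)$-property, is pierced by at most $13$ points, by \cite{432}. In the hypergraph language this is the assertion $\tau(S)\le 13$.

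Finally I would translate back. A piercing set of $E$ is precisely a representative candidate set of $S$, i.e., a set of candidates each of whose voters approves at least one. Hence there is such a set of size at most $13$; adjoining arbitrary extra candidates if the piercing number happens to be smaller yields a representative set of exactly $13$ candidates, as claimed.

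The only real care needed is bookkeeping: getting the order of the parameters right when passing between $(q,p)$-agreeability and the $(p,q)$-property, and reading ``$13$ candidates'' as ``at most $13$.'' All of the genuine mathematical difficulty is externalized to \cite{432}, whose bound of $13$ is what makes the statement nontrivial. Note that, unlike Corollary \ref{pqcor}, this statement does not actually route through Observation \ref{obs}: we want a representative set rather than a bound on the agreement proportion, so the conclusion is extracted directly from the piercing set itself.
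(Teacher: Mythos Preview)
Your proposal is correct and matches the paper's own reasoning: the corollary is obtained simply by translating $(3,4)$-agreeability into the $(4,3)$-property for the family $E$ of convex approval sets in $\R^2$, invoking the bound $\tau(E)\le 13$ from \cite{432}, and reading a piercing set as a representative candidate set. Your observation that this does not pass through Observation~\ref{obs} is also in line with the paper, which applies Observation~\ref{obs} only afterward to deduce the separate bound $a(S)\ge\frac{1}{13}$.
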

Observation \ref{obs} then implies that $a(S)\ge \frac{1}{13}$ in this instance. However, a better bound was proved in \cite{berg} which implies in this instance that 
%for $\R^2$-convex $(3,4)$-agreeable society, 
$a(S)\geq 0.09$.  This bound follows from the following more general result based on the Fractional Helly Theorem:
\begin{theorem} [Berg et~al.~\cite{berg}]
%\label{dconvex}
Let $d\ge 1$ and $p\ge q\ge 2$ be integers.  Then in every 
 $\R^d$-convex $(q,p)$-agreeable society,
 $$ a(S) \geq 1-\left(1- \frac{\binom q{d+1}}{\binom p{d+1}}\right)^{\frac{1}{d+1}}.$$
\end{theorem}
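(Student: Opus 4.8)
The plan is to reduce the statement to the sharp form of the Fractional Helly Theorem. This theorem (existence of a positive bound due to Katchalski and Liu, the sharp constant due to Kalai) asserts that if a family of $n$ convex sets in $\R^d$ has at least an $\alpha$ fraction of its $\binom{n}{d+1}$ many $(d+1)$-tuples sharing a common point, then some single point lies in at least $\left(1-(1-\alpha)^{1/(d+1)}\right)n$ of the sets. Setting $\alpha=\binom{q}{d+1}/\binom{p}{d+1}$, the target bound is precisely $1-(1-\alpha)^{1/(d+1)}$, so the whole task is to show that $(q,p)$-agreeability forces at least a fraction $\alpha$ of the $(d+1)$-tuples of approval sets to have a common point. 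Applying Fractional Helly then yields a candidate lying in at least $\left(1-(1-\alpha)^{1/(d+1)}\right)|E|$ approval sets, i.e. $\Delta(S)\ge \left(1-(1-\alpha)^{1/(d+1)}\right)|E|$, and dividing by $|E|$ recovers $a(S)=\Delta(S)/|E|$ as at least the claimed quantity.

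To estimate the fraction of intersecting $(d+1)$-tuples I would use a double count. Assume the society has $n\ge p$ voters (the only nonvacuous regime), and let $\mathcal{T}$ denote the collection of $(d+1)$-subsets of the approval sets that possess a common point. Count the pairs $(P,T)$ where $P$ is a $p$-subset of the voters and $T\subseteq P$ is an intersecting $(d+1)$-subset. On one hand, $(q,p)$-agreeability guarantees that inside each $p$-subset $P$ there is a $q$-subset $Q$ whose approval sets share a point; that point lies in every $(d+1)$-subset of $Q$, so $P$ contains at least $\binom{q}{d+1}$ intersecting $(d+1)$-subsets, giving at least $\binom{n}{p}\binom{q}{d+1}$ pairs in total. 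On the other hand, each fixed $T\in\mathcal{T}$ is contained in exactly $\binom{n-(d+1)}{p-(d+1)}$ of the $p$-subsets, so the number of pairs equals $|\mathcal{T}|\binom{n-(d+1)}{p-(d+1)}$.

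Combining the two counts and using the subset-of-a-subset identity $\binom{n}{p}\binom{p}{d+1}=\binom{n}{d+1}\binom{n-(d+1)}{p-(d+1)}$, I obtain
$$|\mathcal{T}|\;\ge\;\binom{q}{d+1}\,\frac{\binom{n}{p}}{\binom{n-(d+1)}{p-(d+1)}}\;=\;\frac{\binom{q}{d+1}}{\binom{p}{d+1}}\binom{n}{d+1}.$$
Thus at least an $\alpha=\binom{q}{d+1}/\binom{p}{d+1}$ fraction of all $(d+1)$-tuples of approval sets are intersecting, and the Fractional Helly Theorem finishes the argument as described in the first paragraph. When $q\le d$ the right-hand side above is $0$ and the claimed inequality degenerates to the trivial $a(S)\ge 0$, so one may assume $q\ge d+1$.

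The only genuinely nontrivial input is the Fractional Helly Theorem itself, which I would invoke as a black box; everything else is elementary. The step most in need of care is the double count, in particular the observation that a common point of $q$ sets is automatically a common point of each of their $(d+1)$-subsets (so here one needs only the trivial direction, not Helly's theorem proper), together with the bookkeeping of the binomial identity. I would also flag the boundary conventions: one should take the society to have at least $p$ voters so that the agreeability hypothesis is not vacuous, and handle $q\le d$ separately as noted above.
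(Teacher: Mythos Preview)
Your argument is correct and matches the approach the paper indicates: the paper is a survey and does not reprove this result, but it explicitly introduces the theorem as one ``based on the Fractional Helly Theorem,'' citing Berg et~al.\ for the original. Your reduction---double-counting pairs $(P,T)$ to show that at least a $\binom{q}{d+1}/\binom{p}{d+1}$ fraction of the $(d{+}1)$-tuples intersect, then invoking Kalai's sharp Fractional Helly bound---is exactly the standard proof from that source, and all steps (including the binomial identity and the boundary cases $n<p$ and $q\le d$) are handled correctly.
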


%\begin{theorem}[Berg et~al.~\cite{berg}]
%If $S$ is an $\R^2$-convex $(3,4)$-agreeable society, then $a(S)\ge 0.09$.   
%\end{theorem}

Since there is no known example of a family of convex sets in $\R^2$ that satisfies the $(4,3)$-property for which the piercing number exceeds 3, we make the following conjecture:   
\begin{conjecture}
If $S$ is an $\R^2$-convex $(3,4)$-agreeable society, then there exist $3$ candidates representing all voters. In particular, $a(S)\ge \frac{1}{3}$.   
\end{conjecture}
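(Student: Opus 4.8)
The statement to establish is that every family $\F$ of convex sets in the plane satisfying the $(4,3)$-property has $\tau(\F)\le 3$; the bound $a(S)\ge\frac13$ is then immediate from Observation \ref{obs}. I should note at the outset that the matching lower bound is already in hand — the discussion preceding the conjecture records that the piercing number of such families can equal $3$ — so the entire difficulty lies in the upper bound, and any successful argument must be tight enough to avoid manufacturing a fourth piercing point on those extremal configurations. My plan is therefore to push the known combinatorial--geometric bound of $13$ down to the conjectured value $3$, rather than to invoke the general $(p,q)$ machinery.

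First I would record the quantitative input that the $(4,3)$-property supplies. A double-counting over $4$-subsets shows that at least a $\tfrac14$ fraction of all triples of $\F$ have a common point, matching the quantity $\binom{3}{3}/\binom{4}{3}$ appearing in the Fractional Helly bound for $d=2$. Feeding this into Fractional Helly already yields a point lying in roughly a $0.09$ fraction of the sets (the bound $1-(3/4)^{1/3}\approx 0.09$ quoted above). The temptation is to iterate: pierce the densest such fraction, delete those sets, and repeat on the remainder, which still satisfies the $(4,3)$-property. The difficulty is that this removes only a constant fraction each round, so it produces a number of points growing like $\log|\F|$; and the Alon--Kleitman route that does force a genuine constant (Theorem \ref{pqtheorem}) passes through weak $\e$-nets and LP duality whose constants are far larger than $3$. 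So the general toolkit cannot, by itself, reach the conjectured value.

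The real plan is to exploit features special to the plane that are unavailable in higher dimensions. I would bring in line-transversal structure (Helly-type theorems for common transversals), a sweep in a generic direction that orders the sets by, say, their leftmost points, and the two-dimensional topology of convex sets (Radon partitions of four points, and the fact that two disjoint convex sets are separated by a line). The goal would be to show that the intersection hypergraph of a $(4,3)$-family is so constrained that three points of the plane hit every set: for instance, by isolating an extremal set, charging its neighbors to one or two points, and arguing that what remains decomposes into at most two further intersecting subfamilies. A careful discharging or case analysis over the possible intersection patterns of small subfamilies would then have to verify that no configuration forces a fourth point.

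The main obstacle is that this is exactly the planar $(4,3)$-problem, a long-standing open question in discrete geometry: the gap between the constant that fractional and probabilistic methods deliver and the conjectured tight value $3$ is precisely the unresolved issue. The hardest step will be the tight extremal analysis — proving that the worst case is no worse than the known piercing-number-$3$ constructions — since the structural rigidity of planar convex families near those configurations is subtle, and it is here that every partial result to date, including the bound of $13$, has stopped short.
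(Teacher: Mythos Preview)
The statement is labeled as a \emph{Conjecture} in the paper, and the paper offers no proof of it; it is stated precisely because no proof is known. So there is no ``paper's own proof'' to compare your attempt against. Your write-up is not a proof either, and you are candid about this: you sketch the standard toolkit (fractional Helly, iteration, Alon--Kleitman), correctly explain why each falls short of the target value~$3$, float a vaguer planar-specific strategy, and then explicitly concede that what remains is ``exactly the planar $(4,3)$-problem, a long-standing open question.'' That assessment is accurate and matches the paper's own position.

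If the assignment was to supply a proof, then the honest verdict is that there is a genuine gap --- indeed the entire argument is missing --- but it is the same gap the paper itself leaves open, and your discussion of why the available machinery stalls before~$3$ is a reasonable account of the state of the art. There is nothing to correct beyond noting that no proof was expected here.
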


Over the last few decades extensive research has been done to improve the Alon--Kleitman bounds, see e.g., \cite{432, KT, KST}. For an excellent survey on the $(p,q)$-problem we refer the reader to \cite{Eckhoff}. 

Practically speaking, some may wonder how useful such theorems are, since it may be hard for a society to satisfy $(q,p)$-agreeability. However, even when such theorems fail, they may fail softly, i.e., if a society isn't $(q,p)$-agreeable, large subset may be, so that a corresponding result can be obtained. 

\section{ $\R^d$-ball societies}
In many cases the upper bounds on the piercing number improve significantly if we restrict ourselves to families of ``nice" sets.   
One such example is a result by Danzer, who proved:

\begin{theorem}[Danzer \cite{Danzer}]
If a family of disks in $\R^2$ satisfies the $(2,2)$-property, then $\tau(\F)\le 4$. 
\end{theorem}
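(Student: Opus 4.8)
The $(2,2)$-property for a family of disks simply asserts that every two disks intersect, so the claim is that any pairwise intersecting family of disks in $\R^2$ admits a piercing set of four points. It suffices to treat finite families (the bound for an arbitrary family follows by a standard compactness argument). The plan is to first reduce the problem to a clean normalized form and then to exhibit the four points through a direction-based partition of the family.

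First I would single out a disk $D_0$ of minimum radius, scaled so that $D_0$ has radius $1$ and center at the origin $O$, and write $C=\partial D_0$. Every disk $D$ in the family meets $D_0$, so its center $c$ satisfies $|c|\le 1+r$, where $r\ge 1$ is its radius. A one-line computation shows that the point $p(\theta)\in C$ lying in the direction $\theta$ of $c$ is within distance $\bigl||c|-1\bigr|\le r$ of $c$, so $D$ contains $p(\theta)$. Thus every disk meets the unit circle $C$, and each disk is naturally labelled by the direction $\theta$ of its center.

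Next I would reformulate ``four points suffice'' through Helly's theorem (Theorem \ref{helly}): since disks are convex, a subfamily has a common point as soon as every three of its members do, so it is enough to partition the family into four classes, each of which has a common point. I would build these classes geometrically. The disks that contain $O$ all share the point $O$ and can be assigned to a single class. For the remaining disks, which avoid $O$ and genuinely ``point'' in a well-defined direction, I would group them according to which of a small number of angular windows on $C$ contains their direction $\theta$, choosing the windows so that any three disks whose directions lie in a common window are forced---by the minimality of the radius of $D_0$ together with the pairwise-intersection hypothesis---to have a common point.

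The crux, and the step I expect to be the main obstacle, is precisely this calibration: controlling the angular width of the windows so that pairwise intersection upgrades to common intersection within each class, while keeping the total number of classes at four. A naive partition into four equal quadrants with piercing points placed on $C$ fails, because a single window may contain both a small disk hugging $D_0$ and a huge disk tangent to $D_0$ from far away, and no point of $C$ lies in both. One must instead place the piercing points adaptively and calibrate the windows using the fact that two disks that are tangent to $D_0$ from outside and have \emph{small} radii cannot point in very different directions, since they are themselves required to intersect (for radius $r$ the angular separation is constrained by $\sin(\Delta\theta/2)\le r/(1+r)$). Making this precise, and verifying that four classes always cover every direction, is the technical heart of the argument and is exactly what makes this a nontrivial result (a case of Gallai's problem) rather than a direct corollary of Helly's theorem.
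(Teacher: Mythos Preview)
The paper does not supply a proof of this theorem; it is quoted as a result of Danzer and immediately used to derive a corollary for $\R^2$-ball societies. There is therefore no proof in the paper to compare your proposal against.

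Evaluating your proposal on its own: the normalization to a smallest disk $D_0$ and the observation that every member of the family meets $\partial D_0$ are correct and standard opening moves for this problem. But what you have written is, by your own description, an outline with the central step missing. The step you flag as the ``technical heart''---showing that the disks not containing $O$ can be split into at most three angular classes in each of which pairwise intersection forces a common point---is precisely the entire difficulty of the theorem, and nothing in your sketch indicates how to carry it out or why three classes should suffice. Smallest-disk arguments of this general shape yield weaker piercing bounds without much trouble; reaching the sharp value $4$ is what makes Danzer's theorem hard, and his argument is considerably more intricate than a partition into fixed angular sectors. So the proposal names a reasonable starting configuration but stops before the proof actually begins; the gap you acknowledge is not a technicality but the substance of the result.
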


A society $S=(\R^d,E)$ where $E$ consists of balls in $\R^d$ will be called an \emph{$\R^d$-ball} society. Danzer's theorem implies:
\begin{corollary}
In an $\R^2$-ball society $S$, 
if every two voters agree on a candidate then there are $4$ candidates representing all the voters. In particular, $a(S)\ge \frac{1}{4}$. 
\end{corollary}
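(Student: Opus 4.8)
The plan is to recognize that this corollary is an immediate consequence of Danzer's theorem combined with Observation~\ref{obs}, once we unwind the voting terminology into the language of set intersections. There is no deep argument to construct here; the entire mathematical content is already packaged in Danzer's theorem, and the work is the translation.

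First I would translate the hypothesis. In an $\R^2$-ball society $S=(\R^2,E)$, each approval set is a disk in $\R^2$, and saying that ``every two voters agree on a candidate'' means that for any two voters some candidate lies in both of their approval sets, i.e., every pair of disks in $E$ has a common point. This is precisely the statement that the family $E$ satisfies the $(2,2)$-property: among every two disks, two of them (namely both) share a point. Having matched hypotheses, I would invoke Danzer's theorem, which then gives $\tau(S)\le 4$, so there exists a piercing set $C\subseteq\R^2$ of $E$ with $|C|\le 4$.

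Next I would convert this back to voting language. As noted in the paper, a piercing set of a society is exactly a representative candidate set: a set of candidates such that each voter's approval set contains at least one of them. Hence $C$ furnishes at most four candidates representing all voters, and if strictly fewer than four suffice one may pad $C$ with arbitrary additional candidates to obtain exactly four. Finally, the agreement-proportion bound is read off from Observation~\ref{obs}: since $\tau(S)\le 4$, we get $a(S)\ge \tfrac{1}{\tau(S)}\ge \tfrac14$.

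I do not expect a genuine obstacle, as the result is a clean corollary. The only point needing care is the semantic translation in the first step: one must confirm that ``every two voters agree'' corresponds to the $(2,2)$-property (pairwise intersection of disks) and not to some stronger condition such as a single point common to all pairs. In $\R^2$-ball societies the approval sets \emph{are} the disks, so pairwise agreement is literally pairwise intersection, and the identification is exact.
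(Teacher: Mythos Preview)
Your proposal is correct and is exactly the paper's approach: the corollary is stated immediately after Danzer's theorem with no explicit proof, as it is obtained precisely by translating the $(2,2)$-property hypothesis, applying Danzer's bound $\tau\le 4$, and then invoking Observation~\ref{obs}. Your write-up simply makes these implicit steps explicit.
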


A generalization of Danzer's theorem was proved by Karasev \cite{karasev}, which implies:
\begin{corollary}
Let $S$ be an $\R^d$-ball society in which every $d$ voters agree on some candidate. 
\begin{itemize}
\item If $d=3,4$ then there exist $4(d+1)$ candidates representing all the voters. In particular, $a(S)\ge \frac{1}{4(d+1)}$. 
\item If $d>5$ then there exist $3(d+1)$ candidates representing all the voters. In particular, $a(S)\ge \frac{1}{3(d+1)}$. 
\end{itemize}
\end{corollary}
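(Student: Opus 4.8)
The plan is to obtain this corollary exactly as the preceding ball-society results were obtained: translate the voting hypothesis into an intersection property for the family of approval balls, feed that family into Karasev's piercing bound (the cited generalization of Danzer's theorem), and then convert the resulting piercing bound into an agreement bound via Observation \ref{obs}. None of the geometric difficulty lives in the corollary itself; it is all absorbed into Karasev's theorem, which I treat as a black box.

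First I would unwind the hypothesis. Saying that every $d$ voters agree on some candidate means precisely that every $d$ of the approval balls share a common point, i.e. the edge set $E$ satisfies the $(d,d)$-property and $S$ is $(d,d)$-agreeable in the terminology introduced earlier. Since a society has finitely many voters, $E$ is a \emph{finite} family of balls in $\R^d$ satisfying the $(d,d)$-property, which is exactly the setting of Karasev's theorem. Applying that theorem to $E$ yields a piercing set of $E$ of size at most $4(d+1)$ when $d\in\{3,4\}$ and at most $3(d+1)$ when $d>5$. Because a piercing set of the approval balls is by definition a representative candidate set of $S$, this produces the stated number of candidates representing all voters, and in particular gives $\tau(S)\le 4(d+1)$ (resp. $\tau(S)\le 3(d+1)$).

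Finally I would invoke Observation \ref{obs}, which gives $a(S)\ge 1/\tau(S)$; combined with the two piercing bounds this immediately yields $a(S)\ge \frac{1}{4(d+1)}$ and $a(S)\ge \frac{1}{3(d+1)}$ in the respective cases, completing the argument. The only points demanding care are bookkeeping rather than substance: confirming that ``every $d$ voters agree'' is the $(d,d)$-property Karasev requires (and not an off-by-one variant), and checking that the finiteness of the voter set matches his finiteness hypothesis. The genuinely hard part — bounding the piercing number of balls under the $(d,d)$-property — is not reproved here but quoted from \cite{karasev}.
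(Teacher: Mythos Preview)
Your proposal is correct and matches the paper's (implicit) argument exactly: the paper simply states that the corollary follows from Karasev's generalization of Danzer's theorem, which is precisely your step of reading ``every $d$ voters agree'' as the $(d,d)$-property, applying Karasev's piercing bound as a black box, and then invoking Observation~\ref{obs} to convert $\tau(S)$ into a lower bound on $a(S)$. There is no additional content in the paper beyond what you have written.
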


Kyn\v{c}l and Tancer \cite{KT} showed that if we restrict ourselves to families $\F$ of {\em unit disks} in $\R^2$, namely, disks with radius 1 each, or to families of line segments in $\R^d$, then the $(4,3)$-property implies $\tau(F)\le 3$.  
We call the corresponding conditions for societies an \emph{$\R^2$-unit disk society} and 
\emph{$\R^d$-line segment society}, respectively. We have:
\begin{theorem}
Let $S$ be an $\R^2$-unit disk society or an $\R^d$-line segment society.  If in every set of 4 voters some 3 agree on a candidate, then there exist 3 candidates representing all voters. In particular, $a(S)\ge \frac{1}{3}$.   
\end{theorem}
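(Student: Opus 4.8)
The plan is to recognize this statement as a direct corollary of the Kynčl--Tancer theorem together with Observation \ref{obs}, following the same template used for the earlier corollaries in the paper. First I would translate the voting hypothesis into the language of the $(p,q)$-property: the assumption that in every set of $4$ voters some $3$ agree on a candidate means that among every $4$ approval sets there exist $3$ with a common point, which is precisely the $(4,3)$-property for the edge set $E$. This is the $(3,4)$-agreeability condition; the only thing to keep straight is that the parameters appear in the opposite order for the two phrasings, exactly as the paper warns.

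Next I would invoke the geometric restriction. In an $\R^2$-unit disk society the approval sets in $E$ are unit disks, and in an $\R^d$-line segment society they are line segments, so in either case $E$ is one of the two families covered by the Kynčl--Tancer theorem \cite{KT}. Applying that theorem to $E$ gives $\tau(S) = \tau(E) \le 3$. Unwinding the definition of the piercing number, this means there is a cover $C \subseteq X$ with $|C| \le 3$; since a cover of the society is exactly a representative candidate set, we obtain a set of at most $3$ candidates such that every voter approves at least one of them.

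Finally, for the agreement proportion I would apply Observation \ref{obs}, which gives $a(S) \ge \frac{1}{\tau(S)} \ge \frac{1}{3}$, completing the statement.

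Since the entire argument is a bookkeeping exercise built on the cited Kynčl--Tancer bound, there is no genuine mathematical obstacle; the only place to be careful is the translation step, ensuring that ``some $3$ of every $4$ voters agree'' is correctly identified with the $(4,3)$-property (and not the $(3,4)$-property) so that the hypothesis of the cited theorem is actually met.
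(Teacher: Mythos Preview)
Your proposal is correct and matches the paper's approach exactly: the paper states the Kyn\v{c}l--Tancer bound $\tau(\F)\le 3$ for unit disks and line segments with the $(4,3)$-property immediately before the theorem, so the theorem is just its translation to societies together with Observation~\ref{obs}, precisely as you outline.
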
  

%\begin{center}
\begin{figure}[h]
 \includegraphics[width=3in]{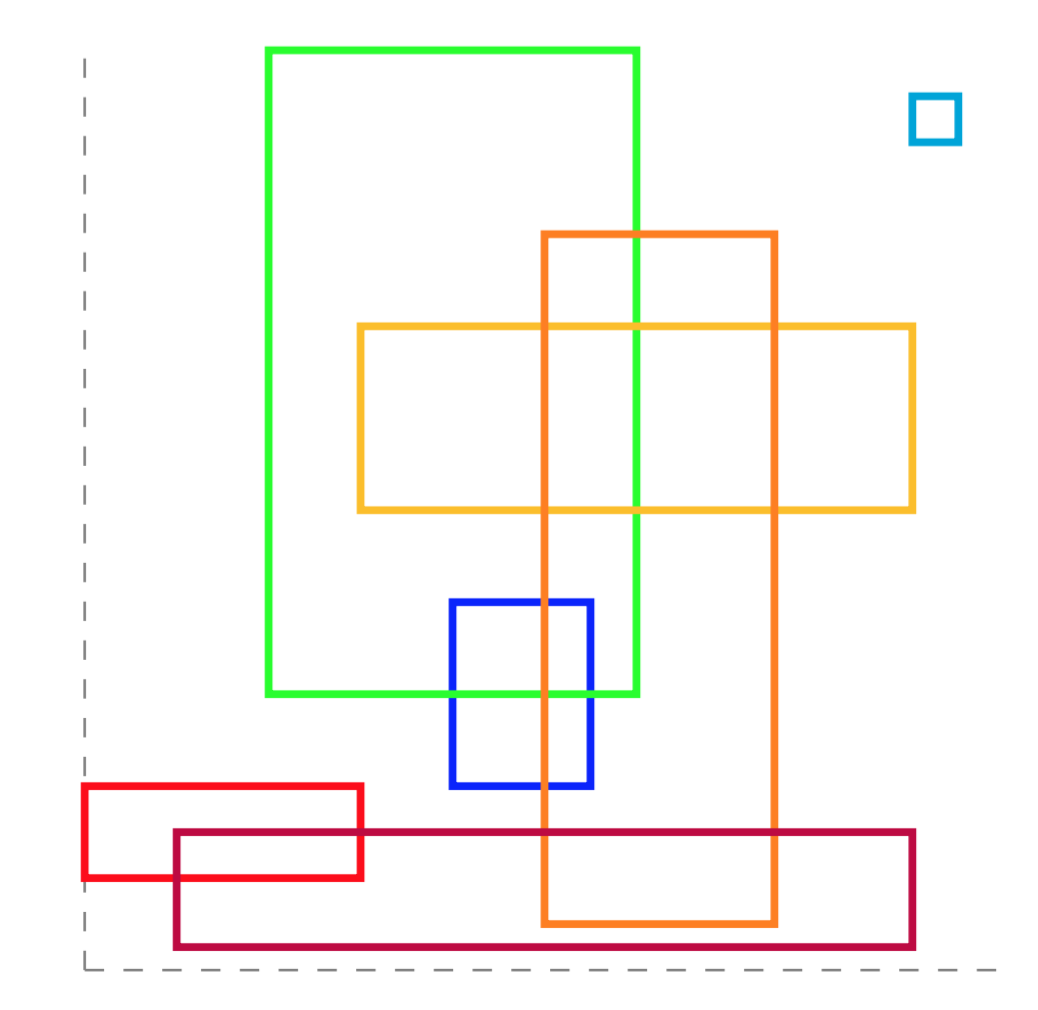}
 \caption{Approval sets of 7 voters in an $\R^2$-box society.}
\end{figure}
%\end{center}

\section{ $\R^d$-box societies}

Another example of ``nice" sets are axis-parallel boxes. A society $S$ on vertex set $\R^d$ in which the approval sets are axis-parallel boxes will be called an {\em $\R^d$-box} society. 
Axis-parallel boxes can arise naturally as approval sets in cases where the political spectrum is multidimensional, and the axes represent separate issues. Each voter specifies an interval on each axis, representing their approval or disapproval of that single issue, and a candidate is approved by the voter if and only if the voter approved the candidate with respect to every issue. This guarantees that the preferences over a product is a box.

%It turns out that agreeable $d$-box societies can be fully represented by one candidate:
%\begin{theorem}\label{boxes}
%In an $\R^d$-box society, if every $2$ voters agree on a candidate, then $a(S)=1$, namely, there exists a candidate representing all the voters.  
%\end{theorem}
%\begin{proof}
%Let us prove the theorem first for $d=1$. Let $H=(X,E)$ be an inetrval model for $S$. Then $E$ is a family of intervals in which every two intervals intersect. Helly's theorem then implies that all the intervals in $E$ intersect. 
%Now, For $d\ge 2$, let $H=(X,E)$ be a $d$-box model for $S$ where every two boxes intersect. Let $E_i$ be the orthogonal projection of $E$ on the axis $x_i, i\le d$. Then $E_i$ a family of intervals in which every two intervals intersect. By the previous argument, there exists a point $b_i$ on $x_i$ that pierces all the intervals in $E_i$. Therefore, the point $(b_1,\dots,b_d)\in \R^d$ pierces all the boxes in $E$.    
%\end{proof}
%The societies discussed in Theorem \ref{boxes} satisfy the $(2,2)$-property.

\begin{theorem}[Berg et al. \cite{berg}]
Let $d\geq 1$ and $p,q\geq 2$ be integers with $q\le p\le 2q-2$, and
let $S$ be a $(q,p)$-agreeable $\R^d$-box society with $n$ voters. Then
$$ a(S) \geq 1-\frac{p-q}{n},$$ 
and this bound is best possible.
\end{theorem}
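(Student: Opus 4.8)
The plan is to convert the statement into a purely combinatorial fact about the intersection pattern of the boxes. The key structural input is that axis-parallel boxes have Helly number $2$: a finite family of boxes has a common point if and only if every two of them meet. This is immediate from the one-dimensional Helly property noted in the introduction, applied coordinate by coordinate, since a box is a product of intervals and a family of boxes has a common point exactly when, on each axis, the corresponding intervals have a common point. Let $G$ be the intersection graph of the $n$ approval boxes and $\overline{G}$ its complement, the \emph{disjointness graph}, whose edges are the pairs of disjoint boxes. By Helly number $2$, the largest subfamily with a common candidate is the largest pairwise-intersecting subfamily, namely the clique number $\omega(G)$, which equals the independence number $\alpha(\overline{G})$. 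Thus $\Delta(S)=\alpha(\overline{G})$ and $a(S)=\alpha(\overline{G})/n=1-\tau(\overline{G})/n$, where $\tau(\overline{G})=n-\alpha(\overline{G})$ is the vertex-cover number. The theorem is therefore equivalent to the clean bound $\tau(\overline{G})\le p-q$.

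Set $k:=p-q$, so the hypothesis $q\le p\le 2q-2$ reads $0\le k\le q-2$, i.e.\ $p\ge 2k+2$. I would first restate $(q,p)$-agreeability on $\overline{G}$: among every $p$ boxes some $q$ pairwise intersect, i.e.\ every $p$-vertex induced subgraph of $\overline{G}$ contains an independent set of size $q$, i.e.\ has vertex-cover number at most $p-q=k$. The desired bound $\tau(\overline{G})\le k$ is then a local-to-global statement for vertex covers, which I would extract from the following lemma: \emph{if a graph has vertex-cover number at least $m$, then some $\le 2m$ of its vertices induce a subgraph with vertex-cover number at least $m$.} Granting this with $m=k+1$, if $\tau(\overline{G})\ge k+1$ I would take a witness $T_0$ on $\le 2k+2\le p$ vertices, pad it (using $n\ge p$) to a $p$-set $T$, and invoke monotonicity of the vertex-cover number under induced supergraphs to get $\tau(\overline{G}[T])\ge\tau(\overline{G}[T_0])\ge k+1>k$, contradicting the hypothesis. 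This is precisely where $p\ge 2k+2$ enters: the witness must fit inside a set of $p$ voters.

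For the lemma I would induct on $m$, first deleting vertices one at a time to reduce to a subgraph with vertex-cover number exactly $m$. The transparent case is when the graph contains a matching of size $m$, whose $2m$ endpoints already induce a subgraph needing $m$ cover vertices. The main obstacle is the gap between matching number and vertex-cover number when the graph is far from being K\"onig--Egerv\'ary (odd cliques being the prototype), since then a large cover need not come from a large matching. To drive the induction I would use that a \emph{private edge} raises the cover number: choose a vertex $x$ in a minimum cover, so that deleting $x$ lowers $\tau$ to $m-1$; apply induction inside $\overline{G}-x$ to get a witness $T'$ on $\le 2(m-1)$ vertices with $\tau\ge m-1$; then append $x$ and a neighbor $w\notin T'$ and output $T'\cup\{x,w\}$. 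Any cover of size $\le m-1$ of this set would have to use $\ge m-1$ vertices inside $T'$ and still cover the edge $xw$, a contradiction. The delicate point I expect to require the most care is guaranteeing a \emph{fresh} private endpoint $w\notin T'$ in the non-K\"onig case; I would secure it through the Gallai--Edmonds decomposition, whose factor-critical parts are exactly what produce the matching/cover gap, and it is against this structure that the hypothesis $p\le 2q-2$ is calibrated.

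Finally, to show the bound is best possible I would exhibit an extremal society on the line, valid for every $d\ge 1$. Take $k+1=p-q+1$ pairwise disjoint short intervals $J_i=[2i,2i+1]$ for $0\le i\le k$, together with $n-k-1$ copies of the long interval $[0,2k+2]$. Every long interval meets every interval, and the only disjoint pairs are among the $J_i$, so $\overline{G}$ is a clique $K_{k+1}$ plus isolated vertices; hence $\tau(\overline{G})=k$ and $\Delta(S)=n-k$, giving $a(S)=1-(p-q)/n$. For $(q,p)$-agreeability, among any $p$ of these intervals at most $k+1$ are short, so the chosen long intervals together with a single short interval form a pairwise-intersecting set of size at least $p-(k+1)+1=q$. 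Thus the bound is attained.
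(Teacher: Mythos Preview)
The paper does not prove this theorem; it merely quotes the result from Berg et~al.\ \cite{berg}. So there is no in-paper argument to compare against, and I can only evaluate your proposal on its own terms.

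Your reduction is sound. Using the Helly number~$2$ of axis-parallel boxes to identify $\Delta(S)$ with $\omega(G)=\alpha(\overline G)$, and then rewriting the claim as the purely graph-theoretic statement ``if every $p$-vertex induced subgraph of $H=\overline G$ has vertex-cover number at most $k=p-q$, and $p\ge 2k+2$, then $\tau(H)\le k$'' is exactly right, and your extremal construction is correct and verifies tightness.

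The gap is in your key lemma, and you flag it yourself. The inductive scheme ``delete a vertex $x$ from a minimum cover, get a witness $T'$ of size $\le 2(m-1)$ in $H-x$, then reattach $x$ together with a fresh neighbour $w\notin T'$'' breaks precisely when $N(x)\subseteq T'$, and invoking Gallai--Edmonds in one sentence does not close this. What you actually need is the (known) fact that a $\tau$-vertex-critical graph has at most $2\tau$ vertices; equivalently, if $\tau(H)\ge m$ then some induced subgraph on at most $2m$ vertices already has $\tau\ge m$. A clean way to finish is via the LP relaxation / crown decomposition: either $H$ has a fractional perfect matching, in which case $\tau(H)\ge\tau^*(H)=|V(H)|/2$ and you are done, or there is an independent set $S$ with $|S|>|N(S)|$ and a matching $M$ saturating $N(S)$ into $S$; one then checks that for \emph{every} minimum vertex cover $C$ one has $C\cap S = M(N(S)\setminus C)$, so the $M$-unmatched vertices of $S$ lie in no minimum cover, contradicting $\tau$-vertex-criticality. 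That replaces the ``fresh private neighbour'' step entirely. As written, your proposal identifies the correct target but does not reach it.
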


Thus when $q$ is more than about half the size of $p$, the agreement proportion is rather large and close to 1 for a large number of voters. When $q=p\ge 2$ (that is, for pairwise intersecting boxes) this result implies that there is a candidate that belongs to every voter approval set, and thus in this case, $a(S)=1$.  

We do not have strong general results when $q < p/2 + 1$.  Many have examined the case $q = 2$.
%To generalize, let us now consider societies in with the $d$-box model  that satisfy the $(p,2)$-property for some $p\ge 2$, namely that in every set of $p$ voters there exist two that agree on a candidate. 
In particular, the following is a long-standing conjecture in dimension $2$:
\begin{conjecture}[Wegner \cite{wegner}, 
G\'{y}arf\'{a}s--Lehel \cite{gyarfaslehel}]\label{wegner}
If a family $\F$ of axis-parallel rectangles in $\R^2$ satisfies the $(p,2)$-property, then 
$\tau(\F)\le 2(p-1)$.  
\end{conjecture}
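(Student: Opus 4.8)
Write $\nu=\nu(\F)$ for the maximum number of pairwise disjoint rectangles in $\F$. The $(p,2)$-property says precisely that $\F$ contains no $p$ pairwise disjoint members, i.e.\ $\nu\le p-1$, so it suffices to prove $\tau(\F)\le 2\nu$. The plan is to exploit the product structure of axis-parallel rectangles: two of them meet if and only if both their $x$-projections and their $y$-projections meet. In one dimension the minimum number of piercing points of a family of intervals equals the maximum number of pairwise disjoint intervals, so the $1$-dimensional analogue of the conjecture holds with constant $1$ rather than $2$; the entire difficulty lies in the interaction of the two coordinates. It is also useful to record that, since box intersection is coordinatewise, a pairwise-intersecting subfamily shares a common point by one-dimensional Helly applied in each coordinate; hence $\tau(\F)$ equals the minimum number of cliques needed to cover the intersection graph of $\F$, whose independence number is exactly $\nu$. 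The target is thus the purely combinatorial statement that this graph has clique-cover number at most twice its independence number.

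First I would set up a divide-and-conquer on one coordinate. Choose a vertical line $\ell$ so that at most half the rectangles lie strictly on either side (such a line exists by taking $\ell$ through the median of the rectangles' $x$-centers), and split $\F$ into the subfamily $C$ of rectangles crossing $\ell$, the subfamily $L$ lying strictly left of $\ell$, and the subfamily $R$ lying strictly right. Every rectangle of $C$ has an $x$-projection containing the $x$-coordinate of $\ell$, so within $C$ two rectangles meet exactly when their $y$-projections meet; applying the one-dimensional interval result to these $y$-projections pierces $C$ by $\nu(C)$ points placed on $\ell$. Recursing on $L$ and $R$ yields $\tau(\F)\le \nu(C)+\tau(L)+\tau(R)$. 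The crucial bookkeeping point is that, because membership in a node's subfamily confines each rectangle's entire $x$-extent to that node's vertical slab, the crossing families arising at a single level of the recursion occupy pairwise $x$-disjoint slabs; hence maximum disjoint families chosen inside them are automatically globally disjoint, and the piercing cost summed over all nodes at a fixed level is at most $\nu$. Since a balanced split halves the number of rectangles, the recursion has depth $O(\log n)$ and one obtains $\tau(\F)=O(\nu\log n)$, sharpened to the known $\tau(\F)=O(\nu\log\nu)$ by terminating branches once their independence number drops.

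\textbf{The main obstacle} is reducing the logarithmic recursion depth to the constant $2$ demanded by the conjecture: the slabbing loses a factor at every level, whereas the conjecture asserts that two ``layers'' of points suffice. I would try to replace the recursion by a single global two-layer decomposition built from the two disjointness orders $\prec_x$ (one rectangle entirely left of another) and $\prec_y$ (one entirely below another), using that two rectangles are disjoint exactly when they are comparable in $\prec_x$ or in $\prec_y$, while a clique is a common antichain of both. By Dilworth--Mirsky one can partition $\F$ into $\nu_x\le\nu$ antichains of $\prec_x$ (each having a common $x$-coordinate and so pierceable one-dimensionally in $y$), and symmetrically into $\nu_y\le\nu$ antichains of $\prec_y$; the aspiration is to interleave these two partitions so that all rectangles are covered by at most $\nu_x+\nu_y\le 2\nu$ common points. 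The stubborn difficulty, and the reason the problem is still open, is that maximum disjoint families living in distinct $\prec_x$-antichains overlap in $x$ and so cannot be concatenated into one global disjoint family; controlling this overlap is exactly what blocks the passage from $O(\nu\log\nu)$ to the conjectured $2\nu$, and is where any successful proof must extract genuinely new two-dimensional structure.

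Finally, to make sure the approach is aimed at the right bound, I would revisit the extremal constructions that force $\tau$ to be large relative to $\nu$ in order to calibrate the correct constant; the fact that the best known linear lower bounds already push $\tau$ well above $\nu$ indicates that any valid argument must be essentially tight and cannot afford the slack that the recursive method spends.
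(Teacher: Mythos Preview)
This statement is labeled a \emph{conjecture} in the paper, and the paper offers no proof; it explicitly calls it ``a long-standing conjecture'' and only records the consequence it would have for $(2,p)$-agreeable $\R^2$-box societies. So there is no proof in the paper to compare your attempt against.

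Your write-up is not a proof of the conjecture either, and you are candid about this: your divide-and-conquer on a median vertical line, piercing the crossing family one-dimensionally and recursing on the two sides, is essentially K\'arolyi's argument, and it yields only $\tau(\F)\le \nu(1+\log\nu)$, i.e.\ the bound the paper quotes just below the conjecture. The passage where you hope to ``interleave'' a $\prec_x$-antichain decomposition with a $\prec_y$-antichain decomposition to get $\nu_x+\nu_y\le 2\nu$ piercing points is exactly the step that does not go through: a $\prec_x$-antichain shares a common $x$-coordinate but its members may still be pairwise disjoint in $y$, so a single antichain can require up to $\nu$ points rather than one, and summing over $\nu$ antichains recovers nothing better than the trivial $\nu^2$. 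You identify this obstruction yourself (``maximum disjoint families living in distinct $\prec_x$-antichains overlap in $x$ and so cannot be concatenated''), but identifying it is not overcoming it.

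In short, what you have is a correct derivation of the K\'arolyi bound together with an honest discussion of why the gap to $2(p-1)$ remains; that is appropriate commentary on an open problem, but it is not a proof of Conjecture~\ref{wegner}, and none is known.
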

Conjecture \ref{wegner}, if true, would imply that $(2,p)$-agreeable $\R^2$-box societies $S$ satisfy $a(S)\ge \frac{1}{2(p-1)}$. 

K{\'a}rolyi \cite{karolyi} proved that in families $\F$ of axis-parallel boxes in $\R^d$ that satisfy the $(p,2)$-property we have $\tau(\F) \leq (p-1) \left(1 + \log\left(p-1\right)\right)^{d-1}$. We thus have:
\begin{corollary}\label{boxes}
If $S$ is a $(2,p)$-agreeable $\R^d$-box society, then 
$$a(S) \ge \frac{1}{(p-1) \left(1 + \log\left(p-1\right)\right)^{d-1}}.$$  
\end{corollary}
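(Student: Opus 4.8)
The plan is to derive this purely as a combination of the quoted piercing bound of K\'arolyi with Observation \ref{obs}, so the only real content is a careful translation of the hypothesis. First I would unwind the definition of $(2,p)$-agreeability for the society $S=(\R^d,E)$. By definition this says that among every $p$ voters some $2$ of them agree on a candidate; since each voter's approval set is an edge of $E$, this is exactly the statement that among every $p$ members of the family $E$ there exist $2$ with a common point, which is precisely the $(p,2)$-property for $E$. The one place demanding care is the parameter order: as the paper emphasizes, the indices are swapped between ``$(q,p)$-agreeable'' and the ``$(p,q)$-property,'' so I would double-check that $q=2$ and that the family $E$ really satisfies the $(p,2)$-property rather than the $(2,p)$-property.

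Second, I would note that in an $\R^d$-box society the edge set $E$ consists, by definition, of axis-parallel boxes in $\R^d$. Hence $E$ is exactly a family of the kind to which K\'arolyi's theorem applies. Invoking that theorem with the $(p,2)$-property established above yields the piercing-number bound
$$\tau(S)=\tau(E)\le (p-1)\left(1+\log(p-1)\right)^{d-1}.$$

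Third, I would apply Observation \ref{obs}, which gives $a(S)\ge \frac{1}{\tau(S)}$. Substituting the upper bound on $\tau(S)$ from the previous step and using that $a(S)$ increases as $\tau(S)$ decreases gives
$$a(S)\ge \frac{1}{\tau(S)}\ge \frac{1}{(p-1)\left(1+\log(p-1)\right)^{d-1}},$$
which is the claimed inequality.

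There is no genuine obstacle here: all the mathematical weight is carried by K\'arolyi's theorem, which is assumed, and by Observation \ref{obs}, which is already proved. The proof is therefore a two-line deduction once the hypothesis is correctly rephrased, and the only thing I would be vigilant about is the index bookkeeping in the agreeability-versus-$(p,q)$-property translation and the verification that the approval sets are exactly axis-parallel boxes, so that K\'arolyi's hypotheses are met verbatim.
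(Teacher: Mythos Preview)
Your proposal is correct and matches the paper's approach exactly: the corollary is stated immediately after K\'arolyi's piercing bound and is derived by combining that bound with Observation~\ref{obs}, precisely as you describe. The only content is the translation of $(2,p)$-agreeability into the $(p,2)$-property, which you handle correctly.
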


\section{Societies with Interval and $d$-interval approval sets}

For political spectra that are lines, closed intervals are natural approval sets to consider, since they represent voters with the property that when the voter likes two candidates they would also like any candidate in between.  As indicated above, these are called \emph{linear} societies \cite{berg} and Theorem \ref{agreeable-thm} was the first result on linear societies.  Of course, linear societies are also $1$-box societies, but the results of the prior section are not stronger than Theorem \ref{agreeable-thm} when $d=1$.

Recall that Helly's theorem for $d=1$ implies that if $\F$ is a family of intervals in which every two intervals intersect, then all the intervals in $\F$ intersect.  
This fact was generalized by Gallai for families of intervals that satisfy the $(p,2)$-property:
\begin{theorem}[Gallai, (see \cite{gyarfaslehel})]\label{gallai}
If $\F$ is a finite family of intervals in $\R$ satisfying the $(p,2)$-property, then $\tau(\F)\le p-1$. 
\end{theorem}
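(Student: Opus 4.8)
The plan is to exploit the special one-dimensional structure of intervals, for which the piercing number coincides with a natural packing quantity. First I would restate the hypothesis purely in terms of disjointness: the $(p,2)$-property says that among every $p$ intervals some two intersect, which is exactly the statement that $\F$ contains no $p$ pairwise disjoint intervals. Writing $\nu(\F)$ for the maximum size of a subfamily of pairwise disjoint intervals, the $(p,2)$-property is therefore equivalent to $\nu(\F)\le p-1$. Consequently it suffices to prove the classical interval duality $\tau(\F)=\nu(\F)$, after which the desired conclusion $\tau(\F)\le p-1$ is immediate.

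The inequality $\tau(\F)\ge\nu(\F)$ is the easy half: if $I_1,\dots,I_{\nu}$ are pairwise disjoint, then any piercing set must contain a distinct point in each of them, so at least $\nu(\F)$ points are needed. The substance lies in the reverse inequality, which I would establish by a greedy right-endpoint sweep. Order the intervals by increasing right endpoint, choose the interval with the smallest right endpoint, place a piercing point at that endpoint, and delete every interval that this point stabs; then repeat on the surviving intervals until none remain. This produces points $x_1<x_2<\cdots<x_m$ that by construction pierce all of $\F$, so $\tau(\F)\le m$.

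The key observation is that the $m$ intervals $I_1,\dots,I_m$ whose right endpoints were selected are themselves pairwise disjoint. When $I_j$ was chosen it had survived the deletion caused by $x_{j-1}$, so $x_{j-1}\notin I_j$; since $x_{j-1}$ is the right endpoint of $I_{j-1}$ and $I_j$ (having survived that stage) has right endpoint at least $x_{j-1}$, the left endpoint of $I_j$ must strictly exceed $x_{j-1}$, placing $I_j$ entirely to the right of $I_{j-1}$. Iterating yields disjointness of $I_1,\dots,I_m$, whence $\nu(\F)\ge m\ge\tau(\F)$. Combined with the easy half this gives $\tau(\F)=\nu(\F)$, and the theorem follows.

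I expect the main obstacle to be the careful verification that the greedy selection produces genuinely disjoint intervals, in particular making sure the right-endpoint ordering forces a surviving interval to lie strictly to the right of the previous stabbing point, rather than merely failing to contain it. For closed intervals this goes through cleanly, since the right endpoint always belongs to its interval, so a surviving interval cannot share a right endpoint with the current point; handling ties in endpoints and the closed-versus-half-open distinction is where one must be slightly attentive, but the right-endpoint-first rule is precisely what makes these cases work.
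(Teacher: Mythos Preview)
Your argument is correct and is the standard proof of Gallai's interval duality $\tau(\F)=\nu(\F)$ via the greedy right-endpoint sweep; the deduction of $\tau(\F)\le p-1$ from the $(p,2)$-property is then immediate. Note, however, that the paper is a survey and does not supply its own proof of this theorem: it simply states the result and attributes it to Gallai (with a reference to Gy\'arf\'as--Lehel), so there is no in-paper argument to compare against. Your greedy proof is exactly the classical one that appears in the cited literature.
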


Gallai's theorem implies for $(2,p)$-agreeable linear societies that $a(S)\ge \frac{1}{p-1}$, a result also implied by Theorem \ref{agreeable-thm}.  However, Theorem \ref{agreeable-thm} does not imply Gallai's theorem.
%\begin{corollary}
%If $S$ is a $(2,p)$-agreeable linear society, then $a(S)\ge \frac{1}{p-1}$. 
%\end{corollary} 

Gallai's theorem can be further generalized to families of 
 intervals that satisfy the $(p,q)$-property. 
 Indeed, by Theorem \ref{HD}, if $p\ge q>d$ are integers satisfying the condition $(d-1)p < d(q-1)$, then 
every family $\F$ of convex sets in $\R^d$ that satisfy the $(p,q)$-property has $\tau(\F)\le p-q+1$.  
Note that in the case $d=1$ the above condition always holds. Therefore we have:

\begin{corollary}
In a $(q,p)$-agreeable linear society where $p\geq q \geq 2$ there exist $p-q+1$ candidates representing all voters.
\end{corollary}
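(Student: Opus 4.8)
The plan is to obtain this directly from the Hadwiger--Debrunner theorem (Theorem~\ref{HD}) specialized to dimension $d=1$, where the convex sets in $\R^1$ are precisely the intervals that serve as approval sets in a linear society.

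First I would translate the hypothesis. By definition a $(q,p)$-agreeable society is one in which among every $p$ voters some $q$ agree on a common candidate; equivalently, writing $\F$ for the family of approval sets, among every $p$ members of $\F$ some $q$ share a point. This is exactly the statement that $\F$ satisfies the $(p,q)$-property, and since the society is linear, $\F$ is a finite family of intervals in $\R$, i.e., a family of convex sets in $\R^1$.

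Next I would verify that the dimensional hypothesis of Theorem~\ref{HD} is met. That theorem requires $p \ge q > 1$ together with $(d-1)p < d(q-1)$; with $d=1$ this inequality collapses to $0 < q-1$, that is, $q \ge 2$, which is precisely our standing assumption. Hence Theorem~\ref{HD} applies to $\F$ and yields $\tau(\F) \le p - q + 1$. Translating back, a piercing set of $\F$ is a representative candidate set of the society, so $\tau(\F) \le p-q+1$ says exactly that there exist $p-q+1$ candidates representing all voters.

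There is essentially no obstacle here: the only point requiring any care is the observation that the Hadwiger--Debrunner dimensional condition $(d-1)p < d(q-1)$ holds automatically in dimension one, reducing to $q \ge 2$, after which the corollary is an immediate specialization of Theorem~\ref{HD} combined with the correspondence between piercing sets and representative candidate sets.
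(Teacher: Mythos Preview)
Your proposal is correct and follows exactly the paper's own argument: the paper derives this corollary by specializing Theorem~\ref{HD} to $d=1$, noting that the condition $(d-1)p < d(q-1)$ then reduces to $0<q-1$ and hence always holds for $q\ge 2$. There is nothing to add.
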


This implies that such societies satisfy $a(S) \geq \frac{1}{p-q+1}$; 
however, Theorem \ref{agreeable-thm} implies for such societies that 
$$a(S)\ge \frac{q-1}{p-1}$$
which is the same conclusion when $q=2$ and is stronger when $q>3$.
%in \cite{su} the following bound on the agreement proportion was proved, 
%which is better whenever $q \geq 3$:
%\begin{theorem}[Berg et al. \cite{su}]
%If a society $S$ with an interval model has the $(p,q)$-property then $a(S)\ge \frac{q-1}{p-1}$.
%\end{theorem}

What if the approval sets are not intervals but intervals with gaps?  Such situations can occur if voters generally approve an interval of positions, but want to rule out intermediate candidate(s) for reasons unrelated to their political position.  For instance, a voter likes candidates $x$ and $z$, but dislikes some candidate $y$ for personal reasons.
Such situations give rise to the approval sets that are $d$-intervals.

%\begin{center}
\begin{figure}
 \includegraphics[width=3.5in]{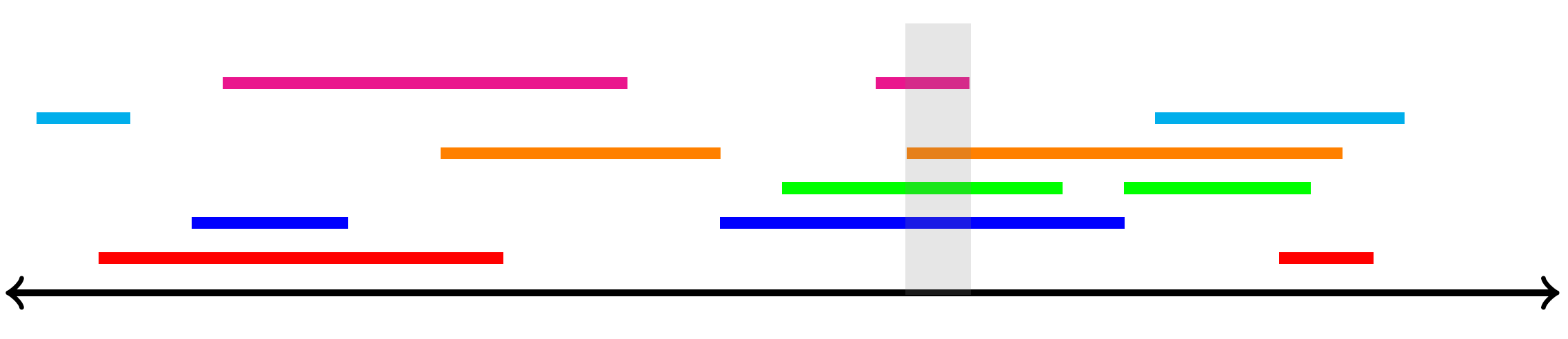}
 \caption{Approval sets of 6 voters in a $2$-interval society with  approval proportion $4/6$.}
\end{figure}
%\end{center}

A {\em $d$-interval} is the union of at most $d$ disjoint closed intervals in $\R$.  
A society $S=(\R,E)$ will be called a {\em $d$-interval society} and each approval set is a $d$-interval.  
The case of $2$-interval societies (\emph{double interval societies}) was studied in \cite{klawe}.

It turns out that finding the piercing numbers of families of $d$-intervals is not an easy task. 
Tardos \cite{tardos} and Kaiser \cite{kaiser}
used topological methods to prove that 
if a family of $d$-intervals satisfies the $(p,2)$-property then
$\tau(H) \leq (d^2-d+1)(p-1)$. 
%In the separated case the bound improves to $\tau(H) \leq (d^2-d+1)(p-1)$.
This translates as follows:
\begin{corollary}
Let $S$ be a $(2,p)$-agreeable $d$-interval society. Then there exist $d^2-d+1$ candidates representing all voters. In particular, $$a(S)\ge \frac{1}{d^2-d+1}.$$ 
\end{corollary}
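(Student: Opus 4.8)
The plan is to obtain the corollary directly from the Tardos--Kaiser theorem stated just above, combined with Observation \ref{obs}; the substantive (topological) content lives entirely in that theorem, which we may invoke as given. First I would fix the dictionary between the two languages. Writing $S=(\R,E)$, every edge of $E$ is a $d$-interval, and the hypothesis that $S$ is $(2,p)$-agreeable says exactly that among every $p$ of these $d$-intervals some two share a point; that is, $E$ satisfies the $(p,2)$-property. Since a piercing set of $E$ is by definition a set of points meeting every approval set, it is precisely a representative candidate set of $S$, so any upper bound on $\tau(S)$ is at the same time an upper bound on the number of candidates needed to represent all voters.

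Next I would apply the theorem to $E$. The $p$-independent constant $d^2-d+1$ in the corollary is exactly the pairwise specialization: when every two voters agree, $E$ satisfies the $(2,2)$-property, and Tardos--Kaiser yields $\tau(S)\le d^2-d+1$. A minimum piercing set then furnishes at most $d^2-d+1$ candidates such that every voter approves at least one of them, which is the first assertion.

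Finally I would pass from this covering bound to the agreement bound by a one-line appeal to Observation \ref{obs}: from $\tau(S)\le d^2-d+1$ we read off
$$a(S)\ge\frac{1}{\tau(S)}\ge\frac{1}{d^2-d+1}.$$
The only genuine obstacle is the Tardos--Kaiser estimate itself, which rests on the topological methods for $d$-intervals referenced above and lies far beyond the elementary translation performed here; granting it, the corollary is immediate. The single point worth flagging is that this clean constant $d^2-d+1$ is precisely the pairwise case of the theorem---its general $(p,2)$ form carries a further factor $p-1$, so it is exactly pairwise agreement that makes the bound independent of $p$.
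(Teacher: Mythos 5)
Your route is exactly the paper's: the corollary is meant to be an immediate translation of the Tardos--Kaiser bound, using the dictionary ``$(2,p)$-agreeable $=$ $(p,2)$-property'' and ``piercing set $=$ representative candidate set,'' followed by Observation~\ref{obs}. So in terms of approach there is nothing to distinguish---you did what the authors did.

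The caveat you flag at the end, however, is not a cosmetic remark: it identifies an actual error in the statement as printed. The cited theorem gives $\tau(S)\le (d^2-d+1)(p-1)$ under the $(p,2)$-property, so the honest translation is ``there exist $(d^2-d+1)(p-1)$ candidates representing all voters, and $a(S)\ge \frac{1}{(d^2-d+1)(p-1)}$''; the clean constant $d^2-d+1$ is available only when $p=2$, which is the case your argument actually establishes (and the case relevant to the double-interval conjecture of Klawe et al.\ mentioned right after the corollary). For $p>2$ the printed statement is false: split the voters into $p-1$ equal groups, give every voter in a group the same interval, and make the $p-1$ intervals pairwise disjoint. Among any $p$ voters two share a group, so the society is $(2,p)$-agreeable, yet $\tau(S)=p-1$ and $a(S)=\frac{1}{p-1}$, contradicting both claimed bounds as soon as $p-1>d^2-d+1$. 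In short, your proof establishes the corrected statement (the $p=2$ case, or the general case with the factor $p-1$ restored), and your closing flag is precisely the repair the paper's corollary needs.
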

When $d=2$ this answers the question of \cite{klawe}, who conjectured that $a(S) \geq \frac{1}{3}$.

Matou\v{s}ek \cite{matousek} showed that this bound is not far from the truth: there are examples of families of
$d$-intervals with the $(2,2)$-property in which $\tau = c(\frac{d^2}{\log d})$ for some constant $c$.

The notion of a $d$-interval is closely related to the notion of a \emph{separated $d$-interval}, which is defined to be the union of $d$ (possibly empty) intervals, one on each of $d$ fixed parallel lines.

This corresponds to a society which voters express preferences over $d$ different issues, 
and each issue has a political spectrum that is a line.  
Furthermore each voter has an approval set over each issue that is a closed interval.  
A voter will be happy if they can get one of their approved candidates elected in at least one of these spectra.

We call such a society a \emph{separated $d$-interval society}.  Such a society $S = (X,E)$ is one in which $X$ is the union of $d$ copies of $\R$ and each element of $E$ is a separated $d$-interval.
Tardos \cite{tardos} and Kaiser \cite{kaiser} studied these as well and determined for 
a family of separated $d$-intervals satisfying the $(p,2)$-property,
$\tau(H) \leq (d^2-d)(p-1)$. 

Kaiser and Rabinovich \cite{KR} proved that if a family of separated $d$-intervals satisfies the $(p,p)$-property for $p=\lceil \log_2(d + 2) \rceil$, then $\tau(H) = d$. Thus we have:
\begin{corollary}
In a separated $d$-interval society $S$, if every $\lceil \log_2(d + 2) \rceil$ voters agree on a candidate, then there exist $d$ candidates representing all voters. In particular, 
$$a(S)\ge \frac{1}{d}.$$
\end{corollary}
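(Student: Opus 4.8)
The plan is to derive this corollary directly from the Kaiser--Rabinovich theorem together with Observation \ref{obs}, exactly in the spirit of the other corollaries in this section; the only genuine work lies in translating between the language of societies and the language of the $(p,q)$-property. First I would set $p = \ceil{\log_2(d+2)}$ and observe that the hypothesis ``every $p$ voters agree on a candidate'' says precisely that every $p$ of the approval sets in $E$ have a common point. In the convention of this paper this is the $(p,p)$-property: among every $p$ sets, all $p$ of them have a nonempty intersection. Thus the family $E$ of separated $d$-intervals satisfies the $(p,p)$-property for this value of $p$.

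Next I would invoke the Kaiser--Rabinovich theorem for the family $E$. Since $E$ consists of separated $d$-intervals satisfying the $(p,p)$-property with $p = \ceil{\log_2(d+2)}$, the theorem yields $\tau(S) = \tau(H) = d$. By the definition of the piercing number, this produces a cover $C \subseteq X$ of size $d$ that meets every edge of $H$. Recalling that a cover of a society is the same thing as a representative candidate set, this is a set of $d$ candidates such that each voter approves at least one of them, which is the first assertion of the corollary.

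Finally, the bound on the agreement proportion is immediate from Observation \ref{obs}: since $\tau(S) = d$, we have $a(S) \ge \frac{1}{\tau(S)} = \frac{1}{d}$.

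I do not expect any deep obstacle here, since all of the combinatorial and topological difficulty is absorbed into the Kaiser--Rabinovich theorem, which we may assume. The one point requiring genuine care is the translation step: as the paper itself flags, the parameters appear in opposite orders in the $(p,q)$-property and in $(q,p)$-agreeability, so I would double-check the orientation and confirm that ``every $p$ voters agree'' really is the full $(p,p)$-property rather than a weaker $(p,q)$ condition with $q < p$. Once that identification is made correctly, the corollary follows by a single application of the cited theorem followed by Observation \ref{obs}.
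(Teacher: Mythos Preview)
Your proposal is correct and matches the paper's approach exactly: the paper states the Kaiser--Rabinovich result and then simply writes ``Thus we have,'' deriving the corollary from that theorem together with Observation~\ref{obs}. The translation you spell out (hypothesis $\Rightarrow$ $(p,p)$-property $\Rightarrow$ $\tau(S)=d$ $\Rightarrow$ $a(S)\ge 1/d$) is precisely what the paper leaves implicit.
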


Zerbib \cite{Zpq} proved a generalization of these bounds to families of $d$-intervals that satisfy the $(p,q)$-property. She proved that 
if $H$ is a family of $d$-intervals that satisfies the $(p,p)$-property for some integer $p>1$, then
$\tau(H) \le p^{\frac{1}{p-1}}d^{\frac{p}{p-1}} + d$, 
and if $H$ satisfies the 
$(p,q)$-property for some integers $p\ge q>1$, then $$\tau(H)\le \max\Big\{\frac{2(ep)^\frac{q}{q-1}}{q} d^{\frac{q}{q-1}} + d,~ 2p^2d\Big\}.$$
This implies the following: 
\begin{corollary}
\label{zerbib}
Let $S$ be a $d$-interval society.
\begin{itemize}
\item If $S$ satisfies the $(p,p)$-property (namely, If every $p$ voters in $S$ agree on a candidate), then there exist $p^{\frac{1}{p-1}}d^{\frac{p}{p-1}} + d$ candidates representing all voters. In particular, $$a(S)\ge \frac{1}{p^{\frac{1}{p-1}}d^{\frac{p}{p-1}} + d}.$$ 
\item If $S$ satisfies the $(p,q)$-property (namely, 
in every set of $p$ voters in $S$ there exist $q$ that agree on a candidate), then there exist  $$\max\Big\{\frac{2(ep)^\frac{q}{q-1}}{q} d^{\frac{q}{q-1}} + d,~ 2p^2d\Big\}$$ candidates representing all voters. In particular, $$a(S)\ge \min\Big\{\frac{q}{2(ep)^\frac{q}{q-1}d^{\frac{q}{q-1}}+dq},~ \frac{1}{2p^2d}\Big\}.$$ 
\end{itemize}
\end{corollary}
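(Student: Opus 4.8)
The plan is to recognize that this corollary is a direct translation of the $d$-interval piercing bounds of Zerbib \cite{Zpq}, quoted immediately above, through the dictionary established in Section~2. First I would observe that for a $d$-interval society $S=(\R,E)$, the edge set $E$ is by definition a finite family of $d$-intervals. The hypothesis that $S$ satisfies the $(p,p)$-property (respectively the $(p,q)$-property) says exactly that among every $p$ voters some $p$ (respectively some $q$) have a commonly approved candidate, which is precisely the statement that the family $E$ satisfies the $(p,p)$-property (respectively the $(p,q)$-property) in the sense of discrete geometry. Here one must be careful to track the parameter ordering flagged earlier in the paper: the society-level condition is stated directly as a property of the approval sets, so no $p\leftrightarrow q$ swap is introduced.

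With this identification in hand, I would apply Zerbib's bounds verbatim to the family $E$. In the $(p,p)$ case this yields $\tau(S)=\tau(E)\le p^{1/(p-1)}d^{p/(p-1)}+d$, and in the $(p,q)$ case it yields $\tau(S)\le\max\{\frac{2(ep)^{q/(q-1)}}{q}d^{q/(q-1)}+d,\,2p^2d\}$. Since a piercing set of $E$ is exactly a representative candidate set of $S$, and $\tau(S)$ is the minimum size of such a set, these inequalities immediately give the claimed number of candidates representing all voters in each case.

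Finally I would convert the piercing bound into an agreement-proportion bound using Observation~\ref{obs}, which gives $a(S)\ge 1/\tau(S)$. In the $(p,p)$ case this is just the reciprocal of the bound above. In the $(p,q)$ case I would use $1/\max\{A,B\}=\min\{1/A,1/B\}$ with $A=\frac{2(ep)^{q/(q-1)}}{q}d^{q/(q-1)}+d$ and $B=2p^2d$; clearing the factor $1/q$ in $1/A$ by multiplying numerator and denominator by $q$ turns $1/A$ into $\frac{q}{2(ep)^{q/(q-1)}d^{q/(q-1)}+dq}$, while $1/B=\frac{1}{2p^2d}$, matching the stated minimum.

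There is no substantive obstacle here: the mathematical content lies entirely in Zerbib's theorem, and the corollary is obtained by combining it with Observation~\ref{obs}. The only points requiring care are bookkeeping, namely confirming that the society hypotheses map onto the correct $(p,q)$-property for the family of approval sets without an unintended reversal of $p$ and $q$, and performing the routine reciprocal algebra (in particular the $1/\max=\min$ step and the clearing of the $1/q$ factor) correctly.
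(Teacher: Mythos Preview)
Your proposal is correct and matches the paper's approach exactly: the paper does not even write out a proof, merely stating that Zerbib's piercing bounds ``imply the following,'' which is precisely the translation via the dictionary of Section~2 together with Observation~\ref{obs} that you spell out. Your careful tracking of the $(p,q)$ ordering and the $1/\max=\min$ algebra are the right bookkeeping details.
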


\section{$d$-tree Societies}

Viewed as a discrete object, a family of $d$-intervals is a collection of subgraphs of a path $G$, each consisting of at most $d$ connected components. Alon \cite{alon1} extended this setting to collections of subgraphs of a tree $G$. Let $G$ be a tree and let $E$ be a collection of subgraphs of $G$, each consisting of at most $d$ connected components.

The corresponding social choice model has been studied \cite{niedermaier}.  
Consider a society $S=(X,H)$  whose spectrum $X$ is a vertex set of a tree $G$, and each approval set in $H$ is an induced subgraph of $G$ that has at most $d$ connected components.  As discussed in \cite{niedermaier}, $G$ may be a tree of train tracks and each approval set corresponds to a different train company and records on which portion of the track they have trains running.  
A piercing set then corresponds to good locations for stations where riders can transfer between various train companies.

Alon proved that if $H$ is a family of $d$-trees that satisfy the $(p,2)$-property, then $\tau(H)\le 2(p-1)d^2$. This implies:
\begin{corollary}
If $S$ is a $(2,p)$-agreeable $d$-tree society, then there exist $2(p-1)d^2$ candidates representing all voters. In particular, 
$$a(S)\ge \frac{1}{2(p-1)d^2}.$$
\end{corollary}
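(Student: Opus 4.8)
The plan is to recognize that this corollary is a direct translation of Alon's piercing bound into the language of societies, so essentially all of the work lies in fixing the correct dictionary between the two formulations and then invoking Observation~\ref{obs}. There is no new combinatorial or topological content to establish here; Alon's theorem does all the heavy lifting.

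First I would unwind the definition of $(2,p)$-agreeability. A society is $(q,p)$-agreeable exactly when some $q$ of every $p$ voters approve a common candidate; setting $q=2$, this says that among every $p$ approval sets there are two with a common point. In the $(p,q)$-property terminology this is precisely the assertion that the edge set $E$ of $S$ satisfies the $(p,2)$-property. Since $S$ is a $d$-tree society, $E$ is a family of $d$-trees, i.e., induced subgraphs of the underlying tree $G$ each having at most $d$ connected components, so the hypotheses of Alon's theorem are met verbatim.

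Next I would apply Alon's bound to conclude $\tau(S)=\tau(E)\le 2(p-1)d^2$. By the definition of the piercing number this yields a piercing set $C\subseteq X$ with $|C|\le 2(p-1)d^2$, and since a piercing set of a society is exactly a representative candidate set, we obtain the claimed $2(p-1)d^2$ candidates representing all voters. Finally, the bound on the agreement proportion is immediate from Observation~\ref{obs}: because $a(S)\ge \frac{1}{\tau(S)}$ and $\tau(S)\le 2(p-1)d^2$, we get $a(S)\ge \frac{1}{2(p-1)d^2}$.

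The only step requiring genuine care is the parameter bookkeeping rather than any real obstacle: one must keep the order straight, since $(2,p)$-agreeability corresponds to the $(p,2)$-property and \emph{not} the $(2,p)$-property. This is exactly the swap the authors flag when introducing $(q,p)$-agreeability, and it is the natural place to make an off-by-one or transposition error. Provided the translation is stated carefully, the corollary follows in a single line from Alon's theorem together with Observation~\ref{obs}.
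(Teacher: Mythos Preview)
Your proposal is correct and matches the paper's approach exactly: the corollary is stated immediately after Alon's bound $\tau(H)\le 2(p-1)d^2$ as a direct consequence, with the agreement-proportion inequality following from Observation~\ref{obs}. Your careful handling of the $(2,p)$-agreeability versus $(p,2)$-property translation is precisely the only point worth noting.
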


Zerbib \cite{Zpq} showed that the bounds in Corollary \ref{zerbib} extend to $d$-trees as well. Hence:
\begin{corollary}
The bounds in Corollary \ref{zerbib} extend to $d$-trees societies.  
\end{corollary}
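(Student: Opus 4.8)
The plan is to reduce this statement, exactly as Corollary \ref{zerbib} was established in the $d$-interval case, to an upper bound on the piercing number of a family of $d$-trees combined with Observation \ref{obs}. First I would record the dictionary: a $d$-tree society $S=(X,H)$ is a hypergraph whose vertex set $X$ is the vertex set of a tree $G$ and whose edges are the approval sets, each an induced subgraph of $G$ with at most $d$ connected components. Under this dictionary, the hypothesis that $S$ is $(p,p)$-agreeable (resp. $(p,q)$-agreeable) is precisely the statement that the family $H$ satisfies the $(p,p)$-property (resp. the $(p,q)$-property), since ``every $p$ voters agree on a candidate'' means every $p$ of the corresponding $d$-trees share a common vertex.

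Next I would invoke the extension of Zerbib's $d$-interval piercing bounds to $d$-trees. Concretely, for a family $H$ of $d$-trees satisfying the $(p,p)$-property one has
$$\tau(H)\le p^{\frac{1}{p-1}}d^{\frac{p}{p-1}}+d,$$
and for one satisfying the $(p,q)$-property one has
$$\tau(H)\le \max\Big\{\frac{2(ep)^{\frac{q}{q-1}}}{q}\,d^{\frac{q}{q-1}}+d,~2p^2d\Big\},$$
the very same expressions that appear for $d$-intervals. Since a $d$-interval is a union of at most $d$ connected subgraphs of a path, and a path is a special tree, these tree bounds are genuine generalizations that specialize to the $d$-interval bounds when $G$ is a path.

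Finally, applying Observation \ref{obs}, namely $a(S)\ge 1/\tau(S)$, to each of the above upper bounds yields exactly the lower bounds on $a(S)$ recorded in Corollary \ref{zerbib}, now in the setting of $d$-tree societies; no further manipulation is needed beyond substituting $\tau(S)$ by its upper bound.

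The main obstacle is entirely contained in the piercing-number extension itself, which we are entitled to assume here. The difficulty lies in showing that the combinatorial and topological arguments that bound $\tau$ for unions of at most $d$ intervals on a line survive the passage to unions of at most $d$ connected subgraphs of an arbitrary tree, where the branching structure can destroy the linear order that the interval arguments exploit. Once that extension is in hand, the corollary is a purely formal consequence of Observation \ref{obs} together with the translation between $(q,p)$-agreeability and the $(p,q)$-property, exactly paralleling the $d$-interval derivation.
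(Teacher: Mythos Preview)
Your proposal is correct and matches the paper's approach exactly: the paper simply cites Zerbib \cite{Zpq} for the extension of the piercing bounds to $d$-trees and then states the corollary, which follows from Observation \ref{obs} in the way you describe. There is nothing more to the paper's argument than the reduction you outline.
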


\section{Final remarks}

1. There is some literature on \emph{circular societies} (namely societies whose approval sets are circular arcs) \cite{hardin, niedermaier}. Hardin's result \cite{hardin} is that in a circular $(q,p)$-agreeable society $S$ we have $a(S) \ge \frac{q-1}{p}$, and this bound is tight. As for piercing numbers of such families, it is easy to see that an upper bound on the piercing numbers of a families of circular arcs with the $(p,q)$-property is the upper bound on the piercing numbers of families of intervals satisfying the same property, plus 1. Indeed, given a family of circular arcs, choosing one point on the circle arbitrarily and removing all members in the family that are pierced by this point, we obtain a family of intervals.  

%\begin{center}
\begin{figure}
 \includegraphics[width=2.5in]{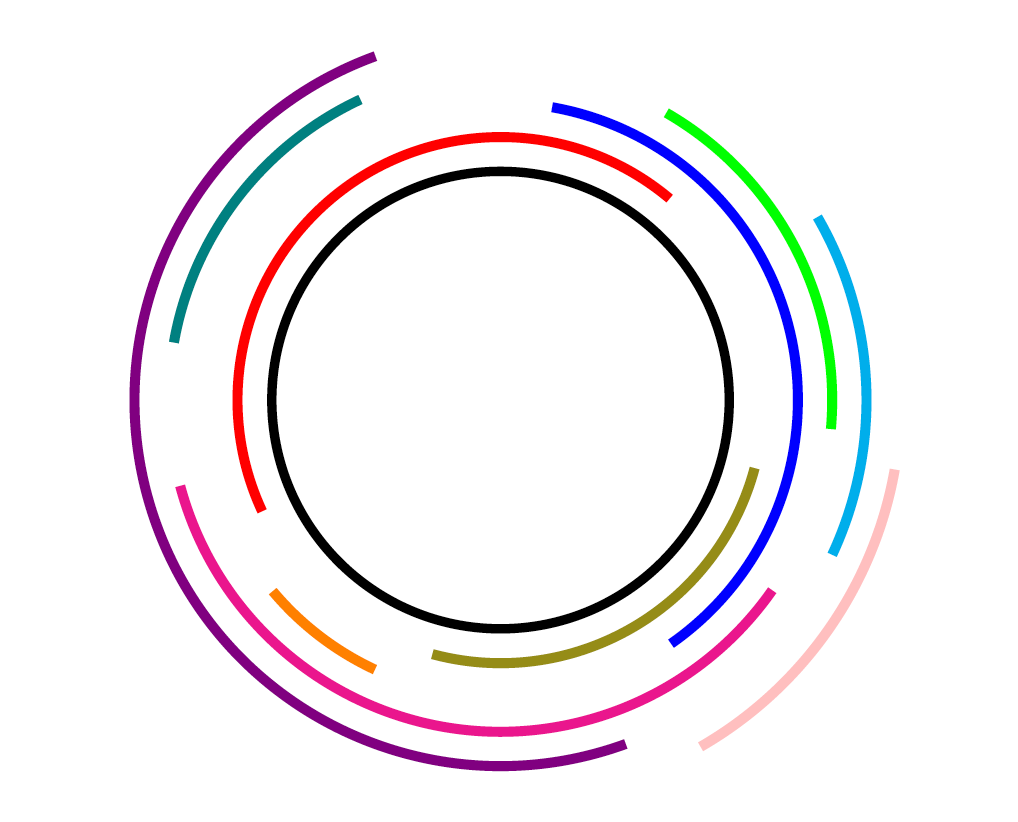}
 \caption{Approval sets of 10 voters in a circular society.}
\end{figure}
%\end{center}

2. Fractional and weighted variants of the piercing number are known in the literature. A {\em fractional cover} of a hypergraph $H=(X,E)$ is a function $f:X\to \R_{\ge 0}$ such that $\sum_{v\in e}f(v) \ge 1$ for every $e\in E$. The {\em fractional piercing number}, usually denoted by $\tau^*(H)$, is $\min \sum_{v\in X} f(v)$, where the minimum is taken over all fractional covers $f$ of $H$. 
Can a corresponding {\em fractional approval proportion} be defined? Does this number carry some practical meaning in the theory of approval voting?

Similarly, given a hypergraph $H=(X,E)$ with an integral weight system $w(e)\ge 1$ for every edge $e\in E$, one can define a {\em $w$-cover} as a multiset $M$ of vertices in $X$ for which $|M\cap e|\ge w(e)$ for every $e\in E$. The {\em $w$-piercing number} $\tau_w(H)$ is $\min |M|$, where the minimum is taken over all $w$-covers $M$ of $H$. What is the corresponding weighted approval proportion? What practical meaning does it have? 
\medskip   

3. Our last remark is about discrete models in approval voting. In this survey we assumed (as was done previously in this area) that candidates exist at every point of a political spectrum.  However, in practice, this may not be the case.  This means that when approval sets intersect, there may not necessarily be a candidate in the intersection (though for linear societies this cannot happen, see \cite{berg}). For societies that are not linear, what can be said about approval proportions when there are a finite number of candidates in a political spectrum? 

Let us call a model in which the number of candidates is considered to be finite a {\em discrete model}. The {\em rank} of a hypergraph $H=(X,E)$ is the maximal size of an edge in $E$. Translated to the language of approval voting, the {\em rank} of a society with a discrete model is the maximal size of an approval set. The following is a simple observation on hypergraphs of rank $r$: 
\begin{observation}
If a hypergraph $H$ of rank $r$ satisfies the $(p,2)$-property then $\tau(H)\le r(p-1)$. 
\end{observation}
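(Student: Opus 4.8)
The plan is to reduce the statement to a property of a maximum matching. Recall that a \emph{matching} in $H=(X,E)$ is a subcollection of pairwise disjoint edges, and write $\nu(H)$ for the \emph{matching number}, the maximum size of such a subcollection. My first step is to observe that the $(p,2)$-property is precisely the statement that $H$ contains no matching of size $p$: if $p$ pairwise disjoint edges existed, then among those $p$ edges no two would intersect, violating the $(p,2)$-property; conversely, any $p$ edges no two of which intersect form a matching of size $p$. Hence $\nu(H)\le p-1$.

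Next I would fix a maximum matching $M=\{e_1,\dots,e_m\}$ with $m=\nu(H)\le p-1$, and propose the union $C:=\bigcup_{i=1}^{m}e_i$ as a piercing set. The key step is to verify that $C$ is indeed a cover, which follows from the maximality of $M$: if some edge $e\in E$ were disjoint from $C$, then $e$ would be disjoint from each $e_i$, so $M\cup\{e\}$ would be a strictly larger matching, contradicting the choice of $M$. Therefore every edge of $H$ meets $C$, that is, $C$ is a cover.

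Finally I would bound the size of $C$. Since $H$ has rank $r$, each $e_i$ contains at most $r$ vertices, so $|C|\le r\cdot m\le r(p-1)$. As $\tau(H)$ is the minimum size of a cover and $C$ is a cover, we conclude $\tau(H)\le |C|\le r(p-1)$, as desired.

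The argument is elementary, and I do not expect a genuine obstacle. The only points requiring care are the equivalence between the $(p,2)$-property and the bound $\nu(H)\le p-1$ on the matching number, and the use of \emph{maximality} (rather than merely \emph{some} matching) to guarantee the covering property; everything else is a direct counting estimate using the rank $r$.
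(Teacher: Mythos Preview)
Your proof is correct and follows essentially the same approach as the paper: take a maximal collection of pairwise disjoint edges (which has size at most $p-1$ by the $(p,2)$-property), observe that its union is a cover by maximality, and bound the size of that union by $r(p-1)$ using the rank hypothesis. Your write-up is in fact more careful than the paper's, since you make explicit the role of maximality in guaranteeing the covering property.
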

\begin{proof}
Let $E' \subset E$ be a subset of pairwise disjoint edges in $E$. Then $|E'| \le p-1$ and every edge in $E$ intersects some edge in $E'$. Thus the set $\bigcup E'$ is a cover of $H$, and its size is at most $r(p-1)$.  
\end{proof}
Thus we have:
\begin{corollary}
In a society $S$ with a discrete $(2,p)$-agreeable model of rank $r$, $a(S)\ge \frac{1}{r(m-1)}$. 
\end{corollary}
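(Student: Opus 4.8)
The plan is to combine the two observations that have just been established, with essentially no additional work. First I would unwind the definitions. A society $S$ with a discrete $(2,p)$-agreeable model is, by the translation between agreeability and the $(p,q)$-property noted earlier in the paper, precisely a hypergraph $H=(X,E)$ satisfying the $(p,2)$-property (recall the parameters are swapped, so that $(q,p)$-agreeable corresponds to the $(p,q)$-property). That the model has rank $r$ means, by definition, that every edge of $H$ has size at most $r$. These are exactly the two hypotheses required by the preceding Observation.

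Next I would invoke that Observation directly: for any rank-$r$ hypergraph satisfying the $(p,2)$-property, one has $\tau(H)\le r(p-1)$. This gives an upper bound on the piercing number of the society under consideration.

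Finally, I would apply Observation \ref{obs}, which asserts $a(H)\ge \frac{1}{\tau(H)}$. Chaining this with the bound $\tau(H)\le r(p-1)$ yields
$$a(S)\ge \frac{1}{\tau(S)}\ge \frac{1}{r(p-1)},$$
which is the desired conclusion. (The printed statement reads $\frac{1}{r(m-1)}$, but since the controlling parameter here is $p$ and the Observation produces $r(p-1)$, this appears to be a typographical slip for $\frac{1}{r(p-1)}$.)

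There is no genuine obstacle in this argument: it is a straightforward composition of two results already in hand. The only point demanding care is the bookkeeping around the parameter swap between $(q,p)$-agreeability and the $(p,q)$-property, together with confirming that the notion of \emph{rank} supplies exactly the maximal-edge-size hypothesis needed to apply the Observation. Once those are lined up, the inequality follows immediately.
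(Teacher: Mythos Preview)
Your proposal is correct and matches the paper's intended argument exactly: the corollary is stated without proof because it follows immediately by combining the preceding Observation ($\tau(H)\le r(p-1)$ for rank-$r$ hypergraphs with the $(p,2)$-property) with Observation~\ref{obs} ($a(H)\ge 1/\tau(H)$). Your identification of the typo $m\mapsto p$ is also correct.
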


In some cases this can be slightly improved. Conjecturally, one such case is when the discrete model is {\em $r$-partite}. An $r$-partite hypergraph $H=(X,E)$ is a hypergraph whose vertex set $X$ has a partition $X=X_1\cup\dots\cup X_r$, and every edge $e\in E$ has $|e\cap X_i|=1$ for every $1\le i\le r$. Thus a 2-partite hypergraph is just a bipartite graph. A famous conjecture of Ryser \cite{ryser} is that an $r$-partite hypergraph $H$ which satisfies the $(p,2)$-property has $\tau(H) \le (r-1)(p-1)$.
This conjecture is wide open for $r\ge 4$. The $r=2$ case is known as K\"onig's theorem, and the $r=3$ case was settled by Aharoni \cite{aharoni}. Let us define a discrete $r$-partite model accordingly. 
Then the above translates to approval voting:
\begin{conjecture}
Suppose that a society $S$ with a discrete $r$-partite model has the property that in every subset of $p$ voters some $2$ of them agree on a candidate (that is, it satisfies the $(p,2)$-property). Then $a(S)\ge \frac{1}{(r-1)(p-1)}$. 
\end{conjecture}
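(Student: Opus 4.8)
The plan is to recognize this statement as the approval-voting face of Ryser's conjecture and to reduce it, via Observation \ref{obs}, to a purely combinatorial bound on the piercing number. First I would invoke Observation \ref{obs}, which gives $a(S) \ge \frac{1}{\tau(S)}$ for any society. Hence it suffices to prove that a discrete $r$-partite society $S$ with the $(p,2)$-property satisfies $\tau(S) \le (r-1)(p-1)$. This is exactly the piercing-number statement of Ryser's conjecture recorded just above, so the whole problem collapses to bounding $\tau$ for $r$-partite hypergraphs.

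Next I would relate the $(p,2)$-property to the matching structure of $S$. Let $\nu(S)$ denote the \emph{matching number}, the maximum number of pairwise disjoint approval sets. If $S$ contained $p$ pairwise disjoint edges, those $p$ voters would violate the $(p,2)$-property, since no two of them would share a candidate; hence $\nu(S) \le p-1$, and conversely the $(p,2)$-property is equivalent to $\nu(S)\le p-1$. Ryser's conjecture in its standard form asserts that every $r$-partite hypergraph satisfies $\tau \le (r-1)\nu$. Combining these facts yields $\tau(S) \le (r-1)\nu(S) \le (r-1)(p-1)$, and feeding this into Observation \ref{obs} gives the desired $a(S) \ge \frac{1}{(r-1)(p-1)}$. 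The reduction is thus complete once Ryser's inequality $\tau \le (r-1)\nu$ is available.

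The remaining work is to establish $\tau \le (r-1)\nu$ for the relevant value of $r$, and this is where the genuine difficulty lies. For $r=2$ the society is bipartite and König's theorem gives $\tau = \nu$ directly, well inside the required bound. For $r=3$ I would follow Aharoni \cite{aharoni}, whose argument is topological: one passes to the independence complex associated to the edges, bounds its topological connectivity, and applies a topological Hall-type theorem to extract a small cover, yielding $\tau \le 2\nu$ and hence the case $r=3$ unconditionally. For general $r$, however, no such argument is known. The main obstacle is precisely that Ryser's conjecture is wide open for $r \ge 4$: the connectivity estimates that drive the $r=3$ case do not extend, and there is no combinatorial substitute, so the present statement can be proved only for $r \le 3$ and otherwise remains conditional on Ryser's conjecture itself.
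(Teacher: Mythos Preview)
Your analysis is correct and matches the paper's own treatment: the statement is recorded as a conjecture precisely because it is the approval-voting reformulation of Ryser's conjecture, obtained via Observation~\ref{obs} together with the standard equivalence between the $(p,2)$-property and $\nu\le p-1$. The paper likewise notes that only the cases $r=2$ (K\"onig) and $r=3$ (Aharoni) are known, and states those separately as a theorem; for $r\ge 4$ no proof exists, so your recognition that the argument is conditional on Ryser for general $r$ is exactly the point.
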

\begin{theorem}
Let $S$ be a $(2,p)$-agreeable society. If $S$ has a discrete bipartite model then $a(S)\ge \frac{1}{p-1}$, and if $S$ has a discrete 3-partite  model then $a(S)\ge \frac{1}{2(m-1)}$. 
\end{theorem}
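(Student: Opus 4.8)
The plan is to derive both bounds directly from the two proven instances of Ryser's conjecture---K\"onig's theorem for $r=2$ and Aharoni's theorem \cite{aharoni} for $r=3$---combined with Observation \ref{obs}. These sharpen the general rank-$r$ estimate $\tau\le r(p-1)$ to $\tau\le(r-1)(p-1)$, and the whole argument is a translation followed by two invocations of known results, so there is essentially no new combinatorial work to do.

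First I would restate the hypothesis in hypergraph terms. Writing $S=(X,E)$, the $(2,p)$-agreeability of $S$ is exactly the $(p,2)$-property of the hypergraph: among every $p$ edges, some two intersect. Equivalently, $E$ contains no family of $p$ pairwise-disjoint edges, so the maximum matching in $S$ has size at most $p-1$.

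For the bipartite case, the model is a bipartite graph and $\tau(S)$ is its vertex-cover number. K\"onig's theorem---the $r=2$ case of Ryser's conjecture, asserting $\tau(H)\le(2-1)(p-1)=p-1$---states that in a bipartite graph the minimum vertex cover equals the maximum matching. Since the matching number is at most $p-1$, we get $\tau(S)\le p-1$, and Observation \ref{obs} yields $a(S)\ge \frac{1}{\tau(S)}\ge\frac{1}{p-1}$. For the $3$-partite case I would invoke Aharoni's theorem \cite{aharoni}, the $r=3$ instance of Ryser's conjecture, which gives $\tau(S)\le(3-1)(p-1)=2(p-1)$; Observation \ref{obs} then gives $a(S)\ge\frac{1}{2(p-1)}$.

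The main obstacle is entirely externalized: the only nontrivial input is Aharoni's theorem, whose proof is considerably deeper, relying on topological methods and the theory of independent systems of representatives, whereas K\"onig's theorem is classical. If one wished to avoid citing Aharoni, reproving the $r=3$ bound would be the crux of the entire statement.
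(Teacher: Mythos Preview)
Your proposal is correct and matches the paper's approach exactly: the paper does not give an explicit proof of this theorem, but the surrounding text makes clear it is the immediate consequence of the $r=2$ (K\"onig) and $r=3$ (Aharoni) cases of Ryser's conjecture together with Observation~\ref{obs}, which is precisely what you do. Your restatement of $(2,p)$-agreeability as ``matching number at most $p-1$'' is the right translation, and your remark that all the depth lies in Aharoni's theorem is apt.
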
  

What about piercing numbers in discrete hypergraphs (namely hypergraphs with finitely many vertices) with no bounded rank?  
Doignon \cite{Doignon} proved  a Helly-type theorem for families in which every set is obtained by the intersection of a convex set $C\subset \R^d$ with the integer lattice $\mathbb{Z}^d$; it states that a finite family of convex sets in $\R^d$ intersects at a point of $\mathbb{Z}^d$ if every $2^d$ of members of the family intersect at a point of $\mathbb{Z}^d$.
Thus we have:
\begin{theorem}
Suppose that a society $S$ has a discrete model in which the candidates are represented by the integer lattice $\mathbb{Z}^d$, and each approval set is given by an intersection of a convex set $C\subset \R^d$ with $\mathbb{Z}^d$. If every $2^d$ agree on a candidate then all the voters agree on a candidate, namely $a(S)=1$. 
\end{theorem}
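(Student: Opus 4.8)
The plan is to reduce the statement to a direct application of Doignon's Helly-type theorem, stated just above, by observing that in this discrete model, a group of voters ``agreeing on a candidate'' is precisely the condition that the corresponding convex sets share a common lattice point. First I would fix notation: write the society as $S = (\mathbb{Z}^d, E)$, where each edge $e \in E$ is the approval set of a voter and, by hypothesis, has the form $e = C_e \cap \mathbb{Z}^d$ for some convex set $C_e \subseteq \R^d$. Since a society has finitely many voters, the family $\{C_e : e \in E\}$ is finite, which is exactly what Doignon's theorem requires.

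Next I would translate the hypothesis. The assertion ``every $2^d$ voters agree on a candidate'' means that for any $2^d$ edges $e_1, \dots, e_{2^d} \in E$ there is a candidate $x \in \mathbb{Z}^d$ with $x \in e_i$ for every $i$. Because $x \in e_i = C_{e_i} \cap \mathbb{Z}^d$ is equivalent to $x \in C_{e_i}$ together with $x \in \mathbb{Z}^d$, this says precisely that every $2^d$ of the convex sets $C_{e_i}$ have a common point belonging to $\mathbb{Z}^d$. This is verbatim the hypothesis of Doignon's theorem.

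Applying Doignon's theorem to the finite family $\{C_e : e \in E\}$ then produces a lattice point $x \in \mathbb{Z}^d$ lying in every $C_e$. Consequently $x \in C_e \cap \mathbb{Z}^d = e$ for each edge $e$, so $x$ is a candidate contained in all approval sets; hence $\Delta(S) = |E|$ and $a(S) = \Delta(S)/|E| = 1$, as claimed.

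The only point requiring any care, and thus the main (and rather mild) obstacle, is the bookkeeping verification that the two notions of intersection coincide: a common lattice point of the approval sets is the same thing as a common point of the convex sets that happens to lie in $\mathbb{Z}^d$. Given the definition of the discrete model this equivalence is immediate, so the argument is a clean translation of Doignon's theorem with no substantive difficulty beyond matching the quantifiers.
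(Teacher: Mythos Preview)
Your proposal is correct and matches the paper's approach: the paper states Doignon's theorem and then introduces this result with ``Thus we have,'' treating it as an immediate translation without giving any further argument. Your write-up simply makes explicit the dictionary between ``agreeing on a candidate'' and ``sharing a common lattice point,'' which is exactly the intended reasoning.
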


There are related works where the integer lattice is replaced by some other proper set $L \subset \R^d$ (see e.g., 
\cite{DeLoera} and the reference therein). 
The more general $(p,q)$-property in such families has not been studied.

\section*{Acknowledgment}
This material is based upon work supported by the National Science Foundation under Grant No. DMS-1440140 while the authors were in residence at the Mathematical Sciences Research Institute in Berkeley, California, during the Fall 2017 semester.

\bibliography{piercing-voting}

\bibliographystyle{elsarticle-num}

\end{document}